\newtheorem{theorem}{Theorem}
\newtheorem{proposition}{Proposition}
\theoremstyle{remark}
\newcommand{\abs}[1]{\left\vert#1\right\vert}
\newcommand{\norm}[1]{\Vert#1\Vert}
\newcommand{\C}{\mathbf{C}}
\providecommand{\bysame}{\leavevmode\hbox to3em{\hrulefill}\thinspace}
\providecommand{\MR}{\relax\ifhmode\unskip\space\fi MR }
\providecommand{\href}[2]{#2}
\begin{document}

\title[Compactness in the $\overline{\partial}$-Neumann problem]{Observations regarding compactness in the $\overline{\partial}$-Neumann
problem}
\author{Mehmet \c{C}el\.{i}k \& Emil J. Straube}

\address{Department of Mathematics\\ Texas A\&M University\\
               College Station, TX 77843}

\email{celik@math.tamu.edu, straube@math.tamu.edu}

\subjclass[2000]{Primary 32W05; Secondary 35N15}
\thanks{Work supported in part by NSF grants DMS-0500842 and DMS-0758534}


\maketitle

\section{Introduction}
Compactness of the $\overline{\partial}$-Neumann operator plays an
important role in several contexts. The condition was initially
introduced by Kohn and Nirenberg \cite{MR0181815} as a sufficient
condition for global regularity. Work of Catlin \cite{MR740870} and
Sibony \cite{MR894582} showed that this does indeed provide a viable
route to global regularity; that is, the compactness condition can
be verified on large classes of domains. We refer the reader to
\cite{MR1748601, MR1800297, MR1912737, MR2275654, Vienna} for
background on the $\mathcal{L}^{2}$-Sobolev theory of the
$\overline{\partial}$-Neumann problem in general and on compactness
in particular.

Subellipticity of the $\overline{\partial}$-Neumann operator is
independent of the metric (a fact usually attributed to Sweeney
(\cite{MR0298708}), although that reference deals with the coercive
case), while continuity in Sobolev spaces is not. The latter is a consequence
of Kohn's results concerning estimates for the
$\overline{\partial}$-Neumann problem with weights
(\cite{MR0344703}) and Barrett's results on failure of Sobolev estimates on
the worm domains (\cite{MR1149863}). Compactness is intermediate between
subellipticity and continuity in Sobolev spaces. Consequently, it is of
interest to know that compactness is also independent of the metric, for metrics subject
only to the condition that they be smooth on the closure of the
domain (so that the induced norms on the $L^{2}$-spaces of forms are
equivalent). In particular, the metrics at higher form levels are
not required to be induced by the metric on $(0,1)$-forms. We also
obtain a new proof of the independence of subellipticity of the
$\overline{\partial}$-Neumann operator from the metric (for the same
class of metrics).

We define the notion of a compactness multiplier in obvious analogy
to that of a subelliptic multiplier (\cite{MR512213, MR1748604}). It
is easily seen that the continuous multipliers form an ideal of the
form $\{f \in C(\overline{\Omega})\;|\;f(z)=0, z \in A\}$, where the
common zero set $A$ is a compact subset of the boundary. This common
zero set may thus be viewed as the obstruction to compactness: the
$\overline{\partial}$-Neumann operator is compact if and only if $A$
is the empty set. Of course, this purely abstractly defined
obstruction is of use only to the extent that it can actually be
determined if the domain is given. We do this for two classes of
domains where compactness is understood, i.e. convex domains in
$\mathbb{C}^{n}$ and complete Hartogs domains in $\mathbb{C}^{2}$.
For a convex domain, and for $(0,1)$-forms, this set is the closure
of the union of all the analytic discs in the boundary. For a
Hartogs domain in $\mathbb{C}^{2}$, on the other hand, the closure 
of the union of all the analytic discs in the boundary
can be strictly contained in $A$. This is a reflection of the fact 
that even in
$\mathbb{C}^{2}$, there can be obstructions to compactness more
subtle than analytic discs in the boundary (\cite{Matheos}, \cite{
MR1912737}, Theorem 4.2). What matters are fine interior points of the
projection (on the base) of the weakly pseudoconvex points, not just
Euclidean interior points of this projection (which correspond to 
analytic discs in the
boundary, cf. \cite{Vienna}, Lemma 3.18 ). Accordingly, $A$ equals
the Euclidean closure of the inverse image of these fine interior
points (under a mild technical condition).

Much of this material comes from the first author's Ph.D.
dissertation (\cite{celik}) written at Texas A\&M University under
the supervision of the second author.

\section{Variation of the Metric}
A theorem by W. J. Sweeney in \cite{MR0298708} shows that coercive
estimates are independent of the metric on the tangent bundle, and
the fact that the same is true for subelliptic
estimates is also usually attributed to him. In view of Kohn's results concerning Sobolev estimates
for the $\overline{\partial}$-Neumann operator associated to
suitably weighted metrics (\cite{MR0344703}), and Barrett's results
on failure of Sobolev estimates on the worm domains
(\cite{MR1149863}), this invariance does not hold for Sobolev
estimates. As compactness is a regularity property that lies between
subellipticity and continuity in Sobolev spaces, it is natural to
ask how it behaves when the metric is changed, and we note that it is independent of the metric. We then
give a new proof that subellipticity of the $\overline{\partial}$-Neumann operator is independent of the metric.

Let $\Omega$ be a bounded pseudoconvex domain in $\mathbb{C}^{n}$. For $q=1, \cdots, n$, denote by
$G^{q}(z)=G^{q}_{I,J}(z)$ a smooth function on $\overline{\Omega}$ with values in the strictly positive definite
Hermitian $\left (n!/(n-q)!q! \right )$ by $\left (n!/(n-q)!q! \right )$  matrices; $I$ and $J$ are strictly increasing $q$-tuples. We denote by
$L^{2}_{(0,q)}(\Omega, G^{q})$ the square-integrable $(0,q)$-forms, but with the standard inner product
replaced by the one given pointwise by $G^{q}_{I,J}(z)$. Let
$u=\sum_{\abs{J}=q}'u_{J}d\overline{z}_{J}$, $v=\sum_{\abs{K}=q}'v_{K}d\overline{z}_{K}$. Here, the prime
indicates as usual summation over increasing $q$-tuples, and
$d\overline{z}_{J} = d\overline{z}_{j_{1}}\wedge \cdots \wedge d\overline{z}_{j_{q}}$. Then
\begin{equation}\label{eq1}
(u,v)_{G^{q}}:=\int_{\Omega}\langle u,v\rangle_{G^{q}}
=\sum_{\abs{J}=\abs{K}=q}' \int_{\Omega} G^{q}_{JK}(z)u_{J}(z)\overline{v_{K}(z)}dV \; ,
\end{equation}
and
\begin{equation}\label{eq2}
\norm{u}_{G^{q}}^{2}:=\int_{\Omega}\langle u,u\rangle_{G^{q}} =\sum_{\abs{J}=\abs{K}=q}'\int_{\Omega} G^{q}_{JK}u_{J}(z)\overline{u_{K}(z)}dV.
\end{equation}
Note that since $G^{q}_{I,J}$ is smooth on the closure of $\Omega$ and takes only positive definite  matrices as values,
the \emph{set} of square-integrable forms does not change, but the inner product does (to an equivalent one).
With this inner product, $L^{2}_{(0,q)}(\Omega,G^{q})$ is a Hilbert space. It will be convenient to refer to the inner product in \eqref{eq1} and the norm in \eqref{eq2} as `weighted'.

\emph{Remark 1}: The metrics $G^{q}$ for different $q$'s are not assumed related. In
particular, it is not assumed that $G^{q}$ is induced by $G^{1}$.

We briefly recall the setup for the $\overline{\partial}$-complex and the $\overline{\partial}$-Neumann problem in the case of a general metric. First,
\begin{eqnarray}\label{eq3}
\overline{\partial}u &=&\overline{\partial}\left( \sum_{\abs{J}=q}' u_{J}d\overline{z}_{J} \right)=\sum_{\abs{J}=q}'\overline{\partial} u_{J}\wedge d\overline{z}_{J}\\
&=&\sum_{j=1}^{n}\sum_{\abs{J}=q}' \frac{\partial u_{J}}{\partial\overline{z}_{j}}d\overline{z}_{j}\wedge d\overline{z}_{J} \; ,
\end{eqnarray}
where the domain consists of those forms where the right hand side, when computed as distributions, is in
$L^{2}_{(0,q+1)}(\Omega,G^{q+1})$. Thus $\overline{\partial}$ is closed and densely defined, at each level $q$, and so has a Hilbert space adjoint $\overline{\partial}^{*}_{G}$. For example, when $q=0$, the usual computation for this adjoint gives
\begin{eqnarray}\label{d-bar-star}
\left(\overline{\partial}\right)_{G}^{\star}u=-\sum_{j,k=1}^{n}\ (G^{0})^{-1}\frac{\partial \left(G^{1}_{jk}u_{j}(z)\right)}{\partial z_{k}}, \ u\in\mbox{Dom}\left(\left(\overline{\partial}\right)_{G}^{*}\right) \; ;
\end{eqnarray}
the boundary condition is
\begin{equation}\label{kkkk}
\sum_{j,k=1}^{n}\ G^{1}_{jk}\ u_{j}\frac{\partial \rho}{\partial z_{k}}  =\  0\ \ \mbox{for}\ \ z\in b\Omega \; .
\end{equation}
Note that if $G^{1}$ is the Euclidean metric, so that $G^{1}_{jk}=\delta_{jk}$ (where $\delta_{jk}$ is the Kronecker-$\delta$), we obtain the familiar boundary condition $\sum_{k=1}^{n} u_{k}\frac{\partial \rho}{\partial z_k}=0$ on $b\Omega$.

For $u,v\in
\mbox{Dom}(\overline{\partial}_{q})\cap\mbox{Dom}\left(\left(\overline{\partial}_{q-1}\right)_{G}^{\star}\right)$,
the Dirichlet form $Q_{G,q}(u,v)$ is defined as
\begin{eqnarray}
Q_{G,q}(u,v):=\left(\overline{\partial}u,\overline{\partial}v\right)_{G^{q+1}}+
\left(\left(\overline{\partial}_{q-1}\right)_{G}^{\star}u,\left(\overline{\partial}_{q-1}\right)_{G}^{\star}v\right)_{G^{q-1}}.
\end{eqnarray}
$\mbox{Dom}(\overline{\partial}_{q})\cap\mbox{Dom}\left(\left(\overline{\partial}_{q-1}\right)_{G_{q}}^{\star}\right)$
is complete with respect to $|||\ u\
|||_{G}^{2}:=Q_{G}(u,u)+\norm{u}_{G}^{2}.$ Thus there is a unique
non-negative selfadjoint operator $\Box_{q}^{G}$ associated to
$Q_{G,q}$ via
\begin{eqnarray}\label{ww}
Q_{G,q}(u,v)=(\Box_{q}^{G}u,v)_{G^{q}},\ \ u\in \mbox{Dom}(\Box_{q}^{G}),
\end{eqnarray}
where $\mbox{Dom}(\Box_{q}^{G})$ consists of those $u$ in
$\mbox{Dom}(\overline{\partial}_{q})\cap\mbox{Dom}\left(\left(\overline{\partial}_{q-1}\right)_{G}^{\star}\right)$
with $\overline{\partial}u \in
\mbox{Dom}\left(\left(\overline{\partial}_{q}\right)_{G}^{\star}\right)$
and $\left (\overline{\partial}_{q-1}\right )_{G}^{\star}u \in
\mbox{Dom}(\overline{\partial}_{q-1})$. (See \cite{MR751959},
Theorem VIII.15, or \cite{Davies} Theorem 4.4.2: if $Q$ is a closed
symmetric quadratic form then $Q$ is the quadratic form of a unique
self adjoint operator as in \eqref{ww}).

Because $G^{q+1}$ induces a norm on
$L^{2}_{(0,q+1)}(\Omega,G^{q+1})$ that is equivalent to the
Euclidean one, the domain and range of $\overline{\partial}$ are
unchanged from the Euclidean setting. In particular, the range is
still equal to the kernel of $\overline{\partial}$ acting on
$(q+1)$-forms. The range of
$\left(\overline{\partial}_{q}\right)_{G}^{\star}$ is then also
closed (because the range of $\overline{\partial}_{q}$ is, see for
example, Lemma $4.1.1$ in \cite{MR1800297}).  However, the range of
$\left(\overline{\partial}_{q}\right)_{G}^{\star}$ is also dense in
ker$(\overline{\partial}_{q})^{\bot_{G^{q}}}$, and so
$\mbox{Im}\left(\left(\overline{\partial}_{q}\right)_{G}^{\star}\right)=
\mbox{ker}(\overline{\partial}_{q})^{\bot_{G^{q}}}$. It follows that
\begin{eqnarray}\label{ssss}
L_{(0,q)}^{2}(\Omega,G^{q})=\underbrace{\mbox{ker}(\overline{\partial}_{q})}_{\mbox{Im}(\overline{\partial}_{q-1})}\oplus
\underbrace{\mbox{Im}\left(\left(\overline{\partial}_{q}\right)_{G}^{\star}\right)}_{\mbox{ker}\left(\left(\overline{\partial}_{q-1}\right)_{G}^{\star}\right)}
\end{eqnarray}
(since also $\mbox{Im}(\overline{\partial}_{q-1})^{\bot_{G^{q}}} = \mbox{ker}((\overline{\partial}_{q-1})^{*}_{G})$ ).
Therefore,  $\ker(\Box_{q}^{G})=
\ker(\overline{\partial}_{q})\cap\ker\left(\left(\overline{\partial}_{q-1}\right)_{G}^{\star}\right)=
\left\{0\right\}$ (the first equality is from \eqref{ww}). Then, Theorem $1.1.2$ in
\cite{MR0179443} implies
\begin{eqnarray}\label{seven}
\norm{u}_{G^{q}}^{2}\lesssim\norm{\overline{\partial}_{q}u}_{G^{q+1}}^{2}+
\norm{\left(\overline{\partial}_{q-1}\right)_{G}^{\star}u}_{G^{q-1}}^{2}\;
\end{eqnarray}
for
$u\in\mbox{Dom}(\overline{\partial})\cap\mbox{Dom}\left(\left(\overline{\partial}_{q-1}\right)_{G}^{\star}\right)$.
\eqref{seven} is the crucial estimate for the $L^{2}$-theory, and
general Hilbert space arguments now give as usual that $\Box_{q}^{G}$ has a
bounded inverse. In fact, if
$u,v\in\mbox{Dom}(\overline{\partial}_{q})\cap\mbox{Dom}\left(\left(\overline{\partial}_{q-1}\right)_{G}^{\star}\right)$,
then
\begin{equation}\label{7a}
\abs{(u,v)_{G^{q}}} \leq \norm{u}_{G^{q}}\norm{v}_{G^{q}} \lesssim
\norm{u}_{G^{q}}\left(\norm{\overline{\partial}_{q}v}_{G^{q+1}}^{2}+
\norm{\left(\overline{\partial}_{q-1}\right)_{G}^{\star}v}_{G^{q-1}}^{2}\right)^{\frac{1}{2}}.
\end{equation}
That is, the functional on the left hand side of \eqref{7a} is a
continuous (conjugate linear) functional in $v$ (for $u$ fixed) in the norm induced by
$Q_{G^{q}}$ on
$\mbox{Dom}(\overline{\partial}_{q})\cap\mbox{Dom}\left(\left(\overline{\partial}_{q-1}\right)_{G}^{\star}\right)$.
Thus, it is given by an inner product
\begin{equation}\label{7b}
(u,v)_{G^{q}}=Q_{G, q}(N_{q}^{G}u,v)\; .
\end{equation}
$N_{q}^{G}$ is the weighted $\overline{\partial}$-Neumann operator. By definition,
$N_{q}^{G}$ maps $L^{2}_{(0,q)}(\Omega, G^{q})$ continuously into
$\mbox{Dom}(\overline{\partial}_{q})\cap\mbox{Dom}\left(\left(\overline{\partial}_{q-1}\right)_{G}^{\star}\right)$,
a fortiori (by \eqref{seven}) into $L^{2}_{(0,q)}(\Omega, G^{q})$.
It is immediate from \eqref{ww} and \eqref{7b} that $N_{q}^{G}$
inverts $\Box_{q}^{G}$. Denote by $N_{q}$ the $\overline{\partial}$-Neumann operator in the Euclidean metric.
\begin{theorem}\label{compactnessmetric}
Let $\Omega$ be a bounded pseudoconvex domain in $\mathbb{C}^{n}$, $1 \leq q \leq n$. Then $N_{q}^{G}$ is compact if and only if $N_{q}$ is compact.
\end{theorem}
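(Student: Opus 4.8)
The plan is to run both directions through the standard ``compactness estimate'', linked by a single explicit zeroth-order conjugation, namely multiplication by the metric tensor at form level $q$ regarded as a bundle automorphism. The first ingredient is the characterization (see, e.g., \cite{Vienna}): since $\Box_{q}^{G}$ is non-negative, self-adjoint and bounded below by \eqref{seven}, $N_{q}^{G}$ is compact if and only if the form domain $\mbox{Dom}(\overline{\partial}_{q})\cap\mbox{Dom}((\overline{\partial}_{q-1})_{G}^{*})$, with the graph norm $|||\cdot|||_{G}$, embeds compactly into $L^{2}_{(0,q)}(\Omega)$; and, using that $L^{2}(\Omega)$ embeds compactly into $W^{-1}(\Omega)$ (Rellich), this is in turn equivalent to the assertion that for every $\varepsilon>0$ there is $C_{\varepsilon}$ with
\begin{equation}\label{ce-weighted}
\norm{u}_{G^{q}}^{2}\;\leq\;\varepsilon\,Q_{G,q}(u,u)\;+\;C_{\varepsilon}\norm{u}_{W^{-1}(\Omega)}^{2},\qquad u\in\mbox{Dom}(\overline{\partial}_{q})\cap\mbox{Dom}\big((\overline{\partial}_{q-1})_{G}^{*}\big),
\end{equation}
the norm $\norm{\cdot}_{W^{-1}(\Omega)}$ being applied coefficientwise. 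The same statement with the Euclidean metric (write $Q_{q}$ for the Euclidean Dirichlet form) characterizes compactness of $N_{q}$. Since the $G^{j}$ are smooth on $\overline{\Omega}$ and positive definite, the weighted and Euclidean $L^{2}$-norms on forms are uniformly equivalent, so it suffices to pass between \eqref{ce-weighted} and its Euclidean counterpart for each fixed $\varepsilon$.

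Let $T$ be multiplication by $\overline{G^{q}}$ (the complex conjugate of the level-$q$ metric matrix), acting on $(0,q)$-forms; it is a smooth invertible bundle automorphism of $\Lambda^{0,q}$ over $\overline{\Omega}$. I would establish three facts. (i) $T$ and $T^{-1}$ are bounded on $L^{2}_{(0,q)}(\Omega)$ and on $W^{-1}_{(0,q)}(\Omega)$ (multiplication by a matrix smooth up to the boundary is bounded on $W^{1}_{0}$, hence on its dual). (ii) $T$ and $T^{-1}$ map $\mbox{Dom}(\overline{\partial}_{q})$ onto itself, and, by the Leibniz rule applied distributionally, $\overline{\partial}(Tu)$ equals a bounded bundle map applied to $\overline{\partial}u$ plus a bounded zeroth-order operator applied to $u$, so $\norm{\overline{\partial}(Tu)}^{2}\lesssim\norm{\overline{\partial}u}^{2}+\norm{u}^{2}$, and symmetrically for $T^{-1}$. (iii) The key identity: $u\in\mbox{Dom}((\overline{\partial}_{q-1})_{G}^{*})$ if and only if $Tu\in\mbox{Dom}(\overline{\partial}_{q-1}^{*})$, and then $\overline{\partial}_{q-1}^{*}(Tu)=\overline{G^{q-1}}\,(\overline{\partial}_{q-1})_{G}^{*}u$; this comes straight out of the two definitions of the Hilbert adjoint together with the tautology $(w,\overline{\partial}\phi)_{G^{j}}=(\overline{G^{j}}w,\overline{\partial}\phi)_{L^{2}}$, valid for every $\phi\in\mbox{Dom}(\overline{\partial}_{q-1})$. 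In particular $\norm{\overline{\partial}_{q-1}^{*}(Tu)}^{2}\lesssim\norm{(\overline{\partial}_{q-1})_{G}^{*}u}^{2}$.

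Combining (ii), (iii) and the $L^{2}$-equivalences yields
\begin{equation}\label{comp}
Q_{q}(Tu,Tu)\;\lesssim\;Q_{G,q}(u,u)+\norm{u}_{G^{q}}^{2},\qquad Q_{G,q}(T^{-1}v,T^{-1}v)\;\lesssim\;Q_{q}(v,v)+\norm{v}^{2},
\end{equation}
for $u$ in the weighted form domain and $v$ in the Euclidean one. Assuming \eqref{ce-weighted}, take $v\in\mbox{Dom}(\overline{\partial}_{q})\cap\mbox{Dom}(\overline{\partial}_{q-1}^{*})$; by (ii)--(iii), $T^{-1}v$ lies in the weighted form domain, and applying \eqref{ce-weighted} to $T^{-1}v$, then \eqref{comp}, the norm equivalences (in particular $\norm{v}\lesssim\norm{T^{-1}v}_{G^{q}}$), and $\norm{T^{-1}v}_{W^{-1}}\lesssim\norm{v}_{W^{-1}}$, I obtain $\norm{v}^{2}\lesssim\varepsilon\big(Q_{q}(v,v)+\norm{v}^{2}\big)+C_{\varepsilon}\norm{v}_{W^{-1}(\Omega)}^{2}$; choosing $\varepsilon$ small to absorb the term $\varepsilon\norm{v}^{2}$ gives the Euclidean compactness estimate, hence $N_{q}$ is compact. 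The reverse implication is the same with $T$ and $T^{-1}$ interchanged. (For $q=n$ the $\overline{\partial}$-term is absent, \eqref{comp} holds without the $\norm{\cdot}^{2}$ tails, and the argument is a plain conjugation up to equivalent norms.)

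The step I expect to be the main obstacle is (iii): one must be sure that multiplication by $\overline{G^{q}}$ carries $\mbox{Dom}((\overline{\partial}_{q-1})_{G}^{*})$ \emph{exactly} onto $\mbox{Dom}(\overline{\partial}_{q-1}^{*})$ — in effect, that it converts the weighted $\overline{\partial}$-Neumann boundary condition into the Euclidean one — and one cannot simply compare boundary conditions, since no boundary regularity is assumed on $\Omega$ and $\mbox{Dom}(\overline{\partial}^{*})$ is only available through the duality pairing. What makes it work is precisely that this pairing transforms under $T$ by the tautological identity above; a lesser point of care is keeping track of the complex conjugations in the weighted inner products, which is why the correct intertwiner is multiplication by $\overline{G^{q}}$ rather than $G^{q}$. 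Everything else — Leibniz, the commutator-type bounds, and the absorption — is routine, and the argument is uniform in $1\le q\le n$.
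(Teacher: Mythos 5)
The gap is in your fact (ii), not (iii). The Leibniz rule does not commute $\overline{\partial}$ past a matrix-valued multiplication operator. For a scalar $f$ one has $\overline{\partial}(fu)=\overline{\partial}f\wedge u+f\,\overline{\partial}u$, but for a genuine bundle endomorphism $T=(T_{KJ})$ of $\Lambda^{0,q}$, the first-order part of $\overline{\partial}(Tu)$ is $\sum'_K\sum_j\sum'_J T_{KJ}\frac{\partial u_J}{\partial\overline{z}_j}\,d\overline{z}_j\wedge d\overline{z}_K$, which involves \emph{all} first $\overline{z}$-derivatives of the coefficients of $u$, not only the antisymmetrized combinations that make up $\overline{\partial}u$. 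Concretely, with $n=2$, $q=1$, $T=\mathrm{diag}(a,b)$, the first-order part of $(\overline{\partial}(Tu))_{12}$ is $b\,\partial u_2/\partial\overline{z}_1-a\,\partial u_1/\partial\overline{z}_2$, and when $a\ne b$ this is not obtainable from $(\overline{\partial}u)_{12}=\partial u_2/\partial\overline{z}_1-\partial u_1/\partial\overline{z}_2$ by multiplication. Consequently $\|\overline{\partial}(Tu)\|$ is not bounded by $\|\overline{\partial}u\|+\|u\|$, and $T$ need not even map $\mathrm{Dom}(\overline{\partial}_q)$ into itself; an example is $u=\overline{\partial}v$ with $v\in L^2$, $\overline{\partial}v\in L^2$, but second derivatives of $v$ not square-integrable, so that $\overline{\partial}u=0$ yet $\overline{\partial}(Tu)\notin L^2$ for generic nonscalar $T$. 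The inequalities in your display \eqref{comp} are therefore unjustified. What would repair (ii) is a Kohn--Morrey type estimate $\sum'_J\sum_j\|\partial u_J/\partial\overline{z}_j\|^2\lesssim Q(u,u)$ on the relevant form domain. The Euclidean version of this estimate is available on pseudoconvex domains, which salvages the direction in which you apply the weighted compactness estimate to $T^{-1}v$ with $v\in\mathrm{Dom}(\overline{\partial})\cap\mathrm{Dom}(\overline{\partial}^*)$; but the reverse direction requires a \emph{weighted} Kohn--Morrey estimate $\sum'_J\sum_j\|\partial u_J/\partial\overline{z}_j\|^2\lesssim Q_{G,q}(u,u)$, which the paper explicitly declines to develop (in the proof of Theorem~\ref{subellipticmetric} it calls this ``complicated'' and deliberately routes around it).

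Your fact (iii), which you flagged as the main risk, is actually the sound part: the tautology $(w,\phi)_{G^j}=(\overline{G^j}w,\phi)_{L^2}$ does show that $u\mapsto \overline{G^q}u$ carries $\mathrm{Dom}((\overline{\partial}_{q-1})_G^{*})$ bijectively onto $\mathrm{Dom}(\overline{\partial}_{q-1}^{*})$ with the stated intertwining; this is precisely the operator $T^G_q$ the paper introduces in \eqref{100}--\eqref{commutativity} and uses for one direction of Theorem~\ref{subellipticmetric}. The paper's proof of Theorem~\ref{compactnessmetric}, however, sidesteps the $\overline{\partial}$-side problem entirely: it expresses $N_q$ and $N_q^G$ via the canonical solution operators, identities \eqref{8a}--\eqref{8b}, shows that the weighted and Euclidean canonical solution operators differ only by composition with the bounded projections $P_q$, $P_q^G$ (identities \eqref{8d}--\eqref{8g}), and then concludes from the elementary facts that $A^{*}A+BB^{*}$ is compact iff $A$ and $B$ are, and that compactness is preserved by composition with bounded operators. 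If you want to keep the compactness-estimate formulation, you must first establish a weighted Kohn--Morrey inequality for general $G^q$; otherwise the operator-decomposition route is the cleaner path.
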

\begin{proof}
Both $N_{q}$ and $N_{q}^{G}$ can be expressed in terms of the
canonical solution operators to $\overline{\partial}$:
\begin{equation}\label{8a}
N_{q} =
\left(\left(\overline{\partial}_{q-1}\right)^{\star}N_{q}\right)^{\star}
\left(\left(\overline{\partial}_{q-1}\right)^{\star}N_{q}\right) +
\left(\left(\overline{\partial}_{q}\right)^{\star}N_{q+1}\right)
\left(\left(\overline{\partial}_{q}\right)^{\star}N_{q+1}\right)^{\star}
\; ,
\end{equation}
and
\begin{equation}\label{8b}
N_{q}^{G}=\left(\left(\overline{\partial}_{q-1}\right)_{G}^{\star}N_{q}^{G}\right)_{G}^{\star}
\left(\left(\overline{\partial}_{q-1}\right)_{G}^{\star}N_{q}^{G}\right)+
\left(\left(\overline{\partial}_{q}\right)_{G}^{\star}N_{q+1}^{G}\right)
\left(\left(\overline{\partial}_{q}\right)_{G}^{\star}N_{q+1}^{G}\right)_{G}^{\star}
\; .
\end{equation}
For $N_{q}$, this is a well known fact, see \cite{MR0461588}, p.55, \cite{Ra84}; for
$N_{q}^{G}$ the proof is the same. Denote by $P_{q}^{G}$ the
orthogonal projection from $L_{0,q}^{2}(\Omega,G^{q})$ onto
ker$(\overline{\partial}_{q})$. Since
$(\overline{\partial}_{q-1})_{G}^{\star}N_{q}^{G}$ annihilates
ker$(\overline{\partial}_{q})^{\perp_{G^{q}}}$, we have
\begin{equation}\label{8c}
\left(\overline{\partial}_{q-1}\right)_{G}^{\star}N_{q}^{G}=
\left(\overline{\partial}_{q-1}\right)_{G}^{\star}N_{q}^{G}P_{q}^{G}
\; .
\end{equation}
Now, if $f$ is a $\overline{\partial}$-closed $(0,q)$-form, then
$\overline{\partial}^{\star}N_{q}f$ and
$(\overline{\partial}_{q-1})_{G_{q}}^{\star}N_{q}^{G}f$ are both
solutions of the equation $\overline{\partial}u=f$; orthogonal in
the respective inner products to ker$(\overline{\partial}_{q-1})$.
Therefore the previous formula implies
\begin{equation}\label{8d}
\left(\overline{\partial}_{q-1}\right)_{G}^{\star}N_{q}^{G}=
(I-P_{q-1}^{G})\overline{\partial}^{\star}N_{q}P_{q}^{G} \; ,
\end{equation}
and (with $q+1$ in place of $q$)
\begin{equation}\label{8e}
\left(\overline{\partial}_{q}\right)_{G}^{\star}N_{q+1}^{G}=
(I-P_{q}^{G})\overline{\partial}^{\star}N_{q+1}P_{q+1}^{G} \; .
\end{equation}
Analogously,
\begin{equation}\label{8f}
\overline{\partial}^{\star}N_{q}=
(I-P_{q-1})\left(\overline{\partial}_{q-1}\right)_{G}^{\star}N_{q}^{G}P_{q}
\; ,
\end{equation}
and
\begin{equation}\label{8g}
\overline{\partial}^{\star}N_{q+1}=
(I-P_{q})\left(\overline{\partial}_{q}\right)_{G}^{\star}N_{q+1}^{G}P_{q+1}
\; .
\end{equation}
By using the fact that $A^{\star}A+BB^{\star}$ is compact if and
only if $A$ and $B$ are compact, the above identities, and the fact
that composition with bounded operators (projections in our case)
preserves compactness, we obtain the theorem.
\end{proof}

For the rest of this section, we assume that $\Omega$ is also ($C^{\infty}$) smooth. A subelliptic estimate of order $\varepsilon > 0$  is said to hold for the $\overline{\partial}$-Neumann problem, if
\begin{equation}
\norm{u}_{G^{q},\varepsilon}^{2}\leq C\left(\norm{\overline{\partial}u}_{G^{q+1}}^{2}+\norm{\overline{\partial}^{\star}u}_{G^{q-1}}^{2}\right)
, \; u \in C_{(0,q)}^{\infty}(\overline{\Omega})\cap\mbox{Dom}\left(\left(\overline{\partial}_{q-1}\right)_{G}^{\star}\right) ,
\end{equation}
where the norm on the left hand side is the $L^{2}$-Sobolev norm of
order $\varepsilon$. We remark that integer Sobolev norms are defined as weighted $L^{2}$-norms of derivatives of forms (acting coefficientwise), and the noninteger ones are then obtained by interpolation.
A subelliptic estimate holds if and only if $N_{q}^{G}$ maps $L_{(0,q)}^{2}(\Omega)$ continuously to $W_{(0,q)}^{2\varepsilon}(\Omega)$.  The proof in the weighted case is the same as in the Euclidean case.
\begin{theorem}\label{subellipticmetric}
Let $\Omega$ be a smooth bounded pseudoconvex domain in $\mathbb{C}^{n}$, $1 \leq q \leq n$, $\varepsilon > 0$. Then $N_{q}^{G}$ is subelliptic of order $2\varepsilon$ if and only if $N_{q}$ is subelliptic of order $2\varepsilon$.
\end{theorem}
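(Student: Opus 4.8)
The plan is to combine the equivalence recorded just above the statement — a subelliptic estimate of order $\varepsilon$ holds at level $q$ if and only if the corresponding (weighted or Euclidean) $\overline{\partial}$-Neumann operator maps $L^{2}_{(0,q)}(\Omega)$ continuously into $W^{2\varepsilon}_{(0,q)}(\Omega)$ — with a direct comparison of the two subelliptic estimates. Thus the theorem reduces to showing that the weighted estimate of order $\varepsilon$ at level $q$ holds if and only if the Euclidean one does. The two estimates differ only in that (i) the $L^{2}$- and $W^{\varepsilon}$-norms are the weighted ones, and (ii) the domain of $(\overline{\partial}_{q-1})^{\star}_{G}$ is cut out by the boundary condition \eqref{kkkk} rather than by $\sum_{k}u_{k}\,\partial\rho/\partial z_{k}=0$. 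Point (i) is harmless, since the $G^{r}$ are smooth on $\overline{\Omega}$ with positive definite values, so the weighted norms (including the non-integer ones obtained by interpolation) are equivalent to the Euclidean ones; point (ii) is the substantive difference, and the whole argument hinges on removing it.

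To deal with (ii) I would introduce the bundle automorphism $\Phi_{q}$ acting on the coefficient vector of a $(0,q)$-form by multiplication by the matrix $\overline{G^{q}}$. This $\Phi_{q}$ is smooth and pointwise invertible on $\overline{\Omega}$, and a short computation from \eqref{d-bar-star}--\eqref{kkkk} gives two facts: (a) $u\in\mbox{Dom}\big((\overline{\partial}_{q-1})^{\star}_{G}\big)$ if and only if $\Phi_{q}u\in\mbox{Dom}\big(\overline{\partial}_{q-1}^{\star}\big)$, the weighted boundary condition on $u$ becoming exactly the Euclidean one on $\Phi_{q}u$; and (b) the first-order operators $(\overline{\partial}_{q-1})^{\star}_{G}\circ\Phi_{q}^{-1}$ and $(G^{q-1})^{-1}\circ\overline{\partial}_{q-1}^{\star}$ have the same principal symbol, hence differ by an order-zero bundle map. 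Consequently $\Phi_{q}$ is a bijection of $C^{\infty}_{(0,q)}(\overline{\Omega})\cap\mbox{Dom}\big((\overline{\partial}_{q-1})^{\star}_{G}\big)$ onto $C^{\infty}_{(0,q)}(\overline{\Omega})\cap\mbox{Dom}\big(\overline{\partial}_{q-1}^{\star}\big)$ with $\big\|(\overline{\partial}_{q-1})^{\star}_{G}\Phi_{q}^{-1}v\big\|\lesssim\|\overline{\partial}^{\star}v\|+\|v\|$, and symmetrically with $\Phi_{q}$ in place of $\Phi_{q}^{-1}$.

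Granting this, suppose the weighted estimate holds at level $q$ and let $v\in C^{\infty}_{(0,q)}(\overline{\Omega})\cap\mbox{Dom}\big(\overline{\partial}_{q-1}^{\star}\big)$; apply the weighted estimate to $u:=\Phi_{q}^{-1}v$. Its left side dominates $\|v\|_{\varepsilon}^{2}$ by (i) and the $W^{\varepsilon}$-boundedness of multiplication by the smooth matrix $\overline{G^{q}}$. On the right side, the $\overline{\partial}^{\star}$-term is $\lesssim\|\overline{\partial}^{\star}v\|^{2}+\|v\|^{2}$ by (b); the $\overline{\partial}$-term does not transfer so cleanly, but $\|\overline{\partial}u\|_{G^{q+1}}\lesssim\|\overline{\partial}u\|$, and $\overline{\partial}u$ is assembled from the derivatives $\partial u_{J}/\partial\overline{z}_{j}$ of the coefficients of $u=\Phi_{q}^{-1}v$, so $\|\overline{\partial}u\|\lesssim\big(\sum_{j,|J|=q}'\|\partial v_{J}/\partial\overline{z}_{j}\|^{2}\big)^{1/2}+\|v\|$. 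The Morrey--Kohn--H\"ormander inequality on the pseudoconvex domain $\Omega$ then bounds $\sum_{j,|J|=q}'\|\partial v_{J}/\partial\overline{z}_{j}\|^{2}$ by $\|\overline{\partial}v\|^{2}+\|\overline{\partial}^{\star}v\|^{2}+\|v\|^{2}$. Combining, $\|v\|_{\varepsilon}^{2}\lesssim\|\overline{\partial}v\|^{2}+\|\overline{\partial}^{\star}v\|^{2}+\|v\|^{2}$, and $\|v\|^{2}$ is absorbed via \eqref{seven}; this is the Euclidean estimate, so $N_{q}$ is subelliptic of order $2\varepsilon$. The reverse implication is obtained the same way, applying the Euclidean estimate to $\Phi_{q}u$ and invoking the Morrey--Kohn--H\"ormander inequality in the metric $G$, which holds for any smooth metric on a bounded pseudoconvex domain by the usual argument.

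I expect the crux to be point (ii): because $\mbox{Dom}\big((\overline{\partial}_{q-1})^{\star}_{G}\big)$ and $\mbox{Dom}\big(\overline{\partial}_{q-1}^{\star}\big)$ are different subspaces, the two Dirichlet forms cannot be compared on the same forms, and everything rests on producing a frame change that simultaneously intertwines the two boundary conditions and the leading parts of the two $\overline{\partial}^{\star}$-operators. One should note that no such $\Phi_{q}$ can also intertwine the two $\overline{\partial}$-operators — the symbol of $\overline{\partial}$ is metric-independent while $\Phi_{q}$ is not scalar — which is precisely why the $\overline{\partial}$-side of the estimate must be routed through the basic identity rather than transferred directly. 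Checking that the commutators arising from $\Phi_{q}$ are of order zero, and that $\Phi_{q}$ has the asserted boundary behavior, are the remaining routine points.
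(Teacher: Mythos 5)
Your forward direction (weighted estimate $\Rightarrow$ Euclidean estimate) coincides with the paper's: your $\Phi_{q}$, multiplication by the smooth, pointwise-invertible matrix $\overline{G^{q}}$, is essentially the paper's isometry $T_{q}^{G}$ from \eqref{100}, it intertwines the two boundary conditions and the two $\overline{\partial}^{\star}$-operators modulo order-zero terms (your (a) and (b) correspond to \eqref{commutativity}), and the $\overline{\partial}$-side of the weighted estimate is controlled by coefficientwise $\overline{z}$-derivatives of $v$ and then closed up with the Euclidean Kohn--Morrey inequality. That is exactly \eqref{proofss}.

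The gap is in your reverse direction. You dispose of it in one sentence by ``applying the Euclidean estimate to $\Phi_{q}u$ and invoking the Morrey--Kohn--H\"ormander inequality in the metric $G$, which holds for any smooth metric on a bounded pseudoconvex domain by the usual argument.'' But the paper's authors explicitly decline to do this: they remark that the argument ``relies (among other things) on the Kohn--Morrey inequality. Instead of attempting to derive a (complicated) version for the case of general metrics $G^{q}$, $1\le q\le n$, we give a different argument for the proof of the reverse direction.'' The difficulty is real: by Remark 1 the metrics $G^{q}$ at different form levels are \emph{not} assumed to be induced by one another, so this is not the Bochner--Kodaira--Nakano setting for a single Hermitian metric, and the integration by parts in $\|(\overline{\partial}_{q-1})_{G}^{\star}u\|_{G^{q-1}}^{2}$ produces boundary and curvature-type terms whose sign and size are not controlled by the ``usual argument.'' Asserting the weighted Kohn--Morrey inequality here is precisely the step the authors judged unjustified without significant additional work, so as written your reverse implication is not proved. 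What the paper does instead is operator-theoretic: it writes $u$ via the weighted Hodge decomposition \eqref{q1}, transfers the weighted canonical solution operators to the Euclidean ones through the projection identities \eqref{8d}--\eqref{8g} (yielding \eqref{q4}), and then reduces the weighted subelliptic estimate to (i) Sobolev-boundedness of $P_{q}^{G}$, obtained from compactness of $N_{q}^{G}$ — which in turn uses Theorem \ref{compactnessmetric} — and (ii) a duality argument showing $\overline{\partial}^{\star}N_{q}:W^{-\varepsilon}_{(0,q)}(\Omega)\to L^{2}_{(0,q-1)}(\Omega)$ is bounded, a standard consequence of Euclidean subellipticity (\eqref{q5}--\eqref{q5a}). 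If you want to salvage a symmetric Kohn--Morrey route, you would need to actually derive and verify the weighted inequality for unrelated $G^{q}$'s, which is a genuine and nontrivial computation, not a citation.
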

\begin{proof}
By the Riesz representation theorem, there is an isomorphism $T_{q}^{G}:L_{(0,q)}^{2}(\Omega)\longrightarrow L_{(0,q)}^{2}(\Omega,G^{q})$ such that
\begin{equation}\label{100}
(u,v) = (T_{q}^{G}u,v)_{G^{q}} \; .
\end{equation}
$T_{q}^{G}$ can also be computed directly from \eqref{eq1}; of importance for us is the fact that the coefficients of $T_{q}^{G}u$ are linear combinations, whose coefficients are functions in $C^{\infty}(\overline{\Omega})$, of the coefficients of $u$. A direct computation shows that $u\in\mbox{Dom}(\overline{\partial}_{q-1}^{\star})$ if and only if $T_{q}^{G}u\in\mbox{Dom}\left(\left(\overline{\partial}_{q-1}\right)_{G_{q}}^{\star}\right)$, and that
\begin{eqnarray}\label{commutativity}
\left(\overline{\partial}_{q-1}\right)_{G}^{\star}T_{q}^{G}u=T_{q-1}^{G}\left(\overline{\partial}_{q-1}\right)^{\star}u \; .
\end{eqnarray}

First assume that there is a subelliptic estimate of order $\varepsilon>0$ in the weighted norm.
Let $u\in\mbox{Dom}(\overline{\partial}_{q})\cap\mbox{Dom}(\overline{\partial}_{q}^{\star})$.
Then $T_{q}^{G}u\in \mbox{Dom}\left(\left(\overline{\partial}_{q-1}\right)_{G_{q}}^{\star}\right)$, and we have
\begin{multline}\label{proofss}
\norm{u}_{\varepsilon}^{2} \; \lesssim \; \norm{T_{q}^{G}u}_{\varepsilon,G^{q}}^{2} + \|u\|^{2}  \\
\lesssim \; \norm{\overline{\partial}_{q}T_{q}^{G}u}_{G^{q+1}}^{2}+
\norm{\left(\overline{\partial}_{q-1}\right)_{G}^{\star}T_{q}^{G}u}_{G^{q-1}}^{2} + \|u\|^{2}\;\;\;\;\;\;\;\;\;\;\;\;\;\;\;\;\; \\
\;\;\; \lesssim \sideset{}{'}\sum_{|K|=q}\sum_{j=1}^{n}\norm{\frac{\partial u_{K}}{\partial \overline{z}_{j}}}^{2}+\norm{u}^{2}+\norm{T_{q-1}^{G}\overline{\partial}_{q-1}^{\star}u}^{2}\;\;\;\;\;\;\;\;\;\;\;\; \;\;\;\;\\
\lesssim \sideset{}{'}\sum_{|K|=q}\sum_{j=1}^{n}\norm{\frac{\partial
u_{K}}{\partial
\overline{z}_{j}}}^{2}+\norm{u}^{2}+\norm{\overline{\partial}_{q-1}^{\star}u}^{2}
 \lesssim \norm{\overline{\partial}_{q}u}^{2}+\norm{\overline{\partial}_{q-1}^{\star}u}^{2}.
\end{multline}
The third inequality results from the form of $T_{q}^{G}u$ pointed
out above, and the last inequality is from the Kohn-Morrey formula
(see e.g. \cite{MR1800297}). \eqref{proofss} is the desired
subelliptic estimate for the unweighted metric.

The above argument relies (among other things) on the Kohn-Morrey inequality. Instead of attempting to derive a (complicated) version for the case of general metrics $G^{q}$, $1 \leq q \leq n$, we give a different argument for the proof of the reverse direction (this argument could also be used for the first direction). When $q=1$, this argument will involve the $\overline{\partial}$-Neumann operators $N_{0}$ and $N_{0}^{G}$; a reference is \cite{MR1800297} Theorem $4.4.3$.

Let $u\in \mbox{Dom}(\overline{\partial}_{q})\cap
\mbox{Dom}\left(\left(\overline{\partial}_{q-1}\right)_{G_{1}}^{\star}\right)\subset L_{(0,q)}^{2}(\Omega,G^{q})$. Then
\begin{eqnarray}\label{q1}
u=\left(\overline{\partial}_{q}\right)_{G}^{\star}N_{q+1}^{G}(\overline{\partial}_{q}u)+
\overline{\partial}_{q-1}N_{q-1}^{G}\left(\left(\overline{\partial}_{q-1}\right)_{G}^{\star}u\right).
\end{eqnarray}
Because $(I-P_{1}^{G})$ projects onto the range of $\left(\overline{\partial}_{q}\right)_{G_{q+1}}^{\star}$, we have
\begin{eqnarray}\label{q2}
\left(\overline{\partial}_{q}\right)_{G_{q+1}}^{\star}N_{q+1}^{G}(\overline{\partial}u)=\left(I-P_{q}^{G}\right)\overline{\partial}_{q}^{\star}N_{q+1}(\overline{\partial}_{q}u).
\end{eqnarray}
For the second term on the right of (\ref{q1}), we use that $\overline{\partial}_{q-1}N_{q-1}^{G}=N_{q}^{G}\overline{\partial}_{q-1}$ and then \eqref{q2} with $q-1$ in place of $q$ to obtain
\begin{multline}\label{q3}
\overline{\partial}_{q-1}N_{q-1}^{G}=N_{q}^{G}\overline{\partial}_{q-1}=
\left(\left(\overline{\partial}_{q-1}\right)_{G}^{\star}N_{q}^{G}\right)_{G}^{\star} \\ = \left(\left(I-P_{q-1}^{G}\right)\overline{\partial}_{q-1}^{\star}N_{q}P_{q}^{G}\right)_{G}^{\star}
 = P_{q}^{G}\left(\overline{\partial}_{q-1}^{\star}N_{q}\right)_{G}^{\star}\left(I-P_{q-1}^{G}\right).
\end{multline}
Thus, by using (\ref{q2}) and (\ref{q3}), we write (\ref{q1}) as
\begin{eqnarray}\label{q4}
u= \left(I-P_{q}^{G}\right)\overline{\partial}^{\star}N_{q+1}(\overline{\partial}_{q}u)+P_{q}^{G}\left(\overline{\partial}_{q-1}^{\star}N_{q}\right)_{G}^{\star}\left(\left(\overline{\partial}_{q-1}\right)_{G_{q}}^{\star}u\right)\; .
\end{eqnarray}
We have used that $(I-P_{q-1}^{G})\left(\overline{\partial}_{q-1}\right)_{G}^{\star}u=
\left(\overline{\partial}_{q-1}\right)_{G}^{\star}u$.

A subelliptic estimate associated with the metric $G$ will follow if we can show the following two mapping properties (as continuous maps):
\begin{itemize}
\item[(i)] $P_{q}^{G}:W_{(0,q)}^{\varepsilon}(\Omega,G^{q})\longrightarrow W_{(0,q)}^{\varepsilon}(\Omega,G^{q})$  ,

\item[(ii)] $\left(\overline{\partial}_{q-1}^{\star}N_{q}\right)_{G}^{\star}:
L_{(0,q)}^{2}(\Omega,G^{q})\longrightarrow W_{(0,q-1)}^{\varepsilon}(\Omega,G^{q})$  .
\end{itemize}

For (i), note that $N_{q}$ is compact (since it is subelliptic); hence so is $N_{q}^{G}$, by Theorem \ref{compactnessmetric}. This implies that $P_{q}^{G}= \overline{\partial}\overline{\partial}^{*}_{G}N^{G}_{q}$ preserves the Sobolev spaces: the proof is analogous to the unweighted case (\cite{MR1800297}, Theorem 6.2.2). Note that the form $Q_{G,q}$ is also covered by the results in \cite{MR0181815}.

As for (ii), note that $W_{(0,q)}^{\varepsilon}(\Omega,G)=W_{(0,q)}^{\varepsilon}(\Omega)$ and $\left(W_{(0,q)}^{\varepsilon}(\Omega)\right)^{\star}=W_{(0,q)}^{-\varepsilon}(\Omega)$, since $0\leq\varepsilon\leq 1/2$; the spaces $W^{\varepsilon}(\Omega)$ and $W_{0}^{\varepsilon}(\Omega)$ coincide for $0\leq\varepsilon\leq 1/2$ (with equivalent norms), see Theorem $11.1$ in \cite{MR0350177}. This duality similarly holds for the weighted spaces, with the weighted $L^{2}$ pairing. Thus, the statement in (ii) is equivalent to having an extension of $\overline{\partial}^{*}N_{q}$ as a continuous map
\begin{eqnarray}\label{q4}
\overline{\partial}^{\star}N_{q}:W_{(0,q)}^{-\varepsilon}(\Omega)\longrightarrow L_{(0,q-1)}^{2}(\Omega) \;.
\end{eqnarray}
This is a well know consequence of subellipticity of $N_{q}$, as follows. $L_{(0,q)}^{2}(\Omega)$ is dense in $W_{(0,q)}^{-\varepsilon}(\Omega)$, so to prove (\ref{q4}) we let $u \in L^{2}_{(0,q)}(\Omega)$ and estimate
\begin{multline}\label{q5}
\norm{\overline{\partial}^{\star}N_{q}u}^{2}+\norm{\overline{\partial}N_{q}u}^{2} = \left(\overline{\partial}\overline{\partial}^{\star}N_{q}u,N_{q}u\right)+
\left(\overline{\partial}^{*}\overline{\partial}N_{q}u, N_{q}u\right)\\
= \left(u,N_{q}u\right)
\lesssim \norm{u}_{-\varepsilon}\norm{N_{q}u}_{\varepsilon}
\leq (\mbox{s.c.})\norm{N_{q}u}_{\varepsilon}^{2}+(\mbox{l.c.})\norm{u}_{-\varepsilon}^{2}\\
\lesssim (\mbox{s.c.})\left(\norm{\overline{\partial}^{\star}N_{q}u}^{2}+\norm{\overline{\partial}N_{q}u}^{2}\right)+(\mbox{l.c.})\norm{u}_{-\varepsilon}^{2} \;.
\end{multline}
The last inequality comes from the subelliptic estimate associated with the Euclidean metric. The first term on the right in the last inequality can be absorbed into the left side of \eqref{q5} to obtain
\begin{eqnarray}\label{q5a}
\norm{\overline{\partial}^{\star}N_{q}u}^{2}+\norm{\overline{\partial}N_{q}u}^{2}\lesssim\norm{u}_{-\varepsilon}^{2}.
\end{eqnarray}
This was for $u\in L_{(0,q)}^{2}(\Omega)$. By density both $\overline{\partial}^{\star}N_{q}$ and $\overline{\partial}N_{q}$ extend to continuous operators from $W_{(0,q)}^{-\varepsilon}(\Omega)$ to $L^{2}_{(0,q-1)}(\Omega)$ and $L^{2}_{(0,q+1)}(\Omega)$, respectively. In particular, (\ref{q4}) holds. This completes the proof of Theorem \ref{subellipticmetric}.
\end{proof}

\section{Obstructions to Compactness}
Let $\Omega $ be a bounded pseudoconvex domain in $\mathbf{C}^{n}$. Recall that $N_{q}$ is compact if and only if there is a so called compactness estimate: for every  $\varepsilon>0$ there is a constant
 $C_{\varepsilon}>0$ such that the estimate
\begin{eqnarray}\label{eq0}
\norm{u}^{2}\leq \varepsilon\left(\norm{\overline{\partial}u}^{2}+\norm{\overline{\partial}^{\star}u}^{2}\right)+C_{\varepsilon}\norm{u}_{-1}^{2}
\end{eqnarray}
is valid for all $u\in \mbox{Dom}(\overline{\partial})\cap\mbox{Dom}(\overline{\partial}^{\star})\subset L_{(0,q)}^{2}(\Omega)$ (\cite{MR1912737}, Lemma 1.1, \cite{Vienna}, Proposition 3.2).

A function $f\in
C(\overline{\Omega})$ is called a compactness multiplier on $\Omega$
if for every $\varepsilon>0$ there is a constant
$C_{\varepsilon,f}>0$ such that the estimate
\begin{eqnarray}\label{eq:no1a}
\norm{fu}^{2} \leq \varepsilon ( \norm{\overline{\partial}u}^{2}+\norm{\overline{\partial}^{\star}u}^{2})+ C_{\varepsilon,f}\norm{u}_{-1}^{2}
\end{eqnarray}
is valid for all $u\in \mbox{Dom}(\overline{\partial})\cap
\mbox{Dom}(\overline{\partial}^{\star})\subset
L_{(0,q)}^{2}(\Omega)$. Note that $f \in C(\overline{\Omega})$ is a compactness multiplier if and only if the multiplication operator $M_{f}: u \rightarrow fu$ from $\text{Dom}(\overline{\partial}) \cap \text{Dom}(\overline{\partial}^{*})$, equipped with the graph norm, to $L^{2}_{(0,q)}(\Omega)$ is compact. Namely, in terms of the graph norm $\|u\|_{graph}$, estimate \eqref{eq:no1a} says that $\|M_{f}u\|^{2} \leq \varepsilon \|u\|_{graph}^{2} + C_{\varepsilon, f}\|u\|_{-1}^{2}$. Because $L^{2}(\Omega)$ embeds compactly into $W^{-1}(\Omega)$, having this inequality for all $\varepsilon > 0$ characterizes compactness of the operator $M_{f}: \text{Dom}(\overline{\partial}) \cap \text{Dom}(\overline{\partial}^{*}) \rightarrow L^{2}_{(0,q)}(\Omega)$, see for example \cite{D'Angelo}, Proposition V.2.3, \cite{MR1934357}, Lemma 2.1, \cite{Vienna}, Lemma 3.3.

The basic properties of compactness multipliers are rather more
elementary than the corresponding facts for subelliptic multipliers (\cite{D'Angelo93}, \cite{MR1748604}).
Let $J^{q}$ be the set of the compactness multipliers
defined as above, associated with $(0,q)$ forms, $1\leq q\leq n$. Denote by $A_{q}$ the common zero set of the elements of $J^{q}$. $A_{q}$ is compact, and by interior elliptic regularity of the complex $\overline{\partial}\oplus\overline{\partial}^{*}$, $A_{q} \subseteq b\Omega$. More precisely, any $\varphi \in C^{\infty}_{0}(\Omega)$ is a compactness multiplier: if $u \in \text{Dom}(\overline{\partial}) \cap \text{Dom}(\overline{\partial}^{*})$, then $\varphi u$ has components in $W^{1}_{0}(\Omega)$. The latter space embeds compactly into $L^{2}(\Omega)$, so that $M_{\varphi}$ is indeed compact.
\begin{proposition}\label{lem2}
Let $\Omega$ be a bounded pseudoconvex domain in $\C^{n}$. The set of compactness multipliers $J^{q}$
is a closed ideal in $C(\overline{\Omega})$, and so equals $\{f \in C(\overline{\Omega}) \;|\; f \equiv 0 \;\text{on}\;A_{q}\}$.
\end{proposition}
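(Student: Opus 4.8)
The plan is to verify, directly from the defining estimate \eqref{eq:no1a}, the three facts that make $J^{q}$ a closed ideal of $C(\overline{\Omega})$ --- that it is a vector subspace, that it absorbs multiplication by arbitrary elements of $C(\overline{\Omega})$, and that it is closed in the uniform topology --- and then to observe that the displayed identity is nothing but the standard description of a closed ideal of $C(X)$, $X$ compact Hausdorff, in terms of its hull. For that last identification I would either quote the general fact or reprove its one non-automatic inclusion by a short normalization argument, indicated below.

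For the algebraic part everything reduces to $\varepsilon$-bookkeeping. If $f,g\in J^{q}$ and $a,b\in\C$, then pointwise $\abs{(af+bg)(z)}^{2}\le 2\abs{a}^{2}\abs{f(z)}^{2}+2\abs{b}^{2}\abs{g(z)}^{2}$, so $\norm{(af+bg)u}^{2}\le 2\abs{a}^{2}\norm{fu}^{2}+2\abs{b}^{2}\norm{gu}^{2}$, and applying \eqref{eq:no1a} to $f$ and to $g$ with $\varepsilon$ replaced by suitable positive multiples of the given $\varepsilon$ yields \eqref{eq:no1a} for $af+bg$. Likewise, for $f\in J^{q}$ and $g\in C(\overline{\Omega})$ one has $\norm{gfu}^{2}\le\norm{g}_{\infty}^{2}\norm{fu}^{2}$, so using the estimate for $f$ with $\varepsilon/(1+\norm{g}_{\infty}^{2})$ in place of $\varepsilon$ shows $gf\in J^{q}$. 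Thus $J^{q}$ is an ideal.

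Closedness is the one step with genuine content, and it is where the restriction $q\ge 1$ comes in. Suppose $f_{k}\in J^{q}$ with $f_{k}\to f$ uniformly on $\overline{\Omega}$, and fix $\varepsilon>0$. For $u\in\mbox{Dom}(\overline{\partial})\cap\mbox{Dom}(\overline{\partial}^{\star})$ I would write
\[
\norm{fu}^{2}\;\le\;2\norm{(f-f_{k})u}^{2}+2\norm{f_{k}u}^{2}\;\le\;2\norm{f-f_{k}}_{\infty}^{2}\norm{u}^{2}+2\norm{f_{k}u}^{2}.
\]
The term $\norm{u}^{2}$ is not controlled by the graph norm, but it is controlled by the basic a priori estimate: by \eqref{seven} with the Euclidean metric, valid for $1\le q\le n$ on a bounded pseudoconvex domain, there is a constant $C$ with $\norm{u}^{2}\le C\big(\norm{\overline{\partial}u}^{2}+\norm{\overline{\partial}^{\star}u}^{2}\big)$. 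Choosing $k$ so large that $2C\norm{f-f_{k}}_{\infty}^{2}<\varepsilon/2$, and then applying \eqref{eq:no1a} for that fixed $f_{k}$ with $\varepsilon/4$ in place of $\varepsilon$, the two bounds add up to $\norm{fu}^{2}\le\varepsilon\big(\norm{\overline{\partial}u}^{2}+\norm{\overline{\partial}^{\star}u}^{2}\big)+C_{\varepsilon}'\norm{u}_{-1}^{2}$, so $f\in J^{q}$. I expect this absorption of $\norm{u}^{2}$ through \eqref{seven} to be the only point requiring care; the rest is routine.

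Once $J^{q}$ is known to be a closed ideal, it equals the ideal $I_{A_{q}}$ of continuous functions vanishing on its common zero set $A_{q}$. The inclusion $J^{q}\subseteq I_{A_{q}}$ is just the definition of $A_{q}$. For the reverse inclusion, let $g\in C(\overline{\Omega})$ vanish on $A_{q}$ (we may assume $g\not\equiv 0$); I would show $g$ is a uniform limit of elements of $J^{q}$. Given $\delta>0$ small, $K_{\delta}:=\{\,z\in\overline{\Omega}:\abs{g(z)}\ge\delta\,\}$ is compact and disjoint from $A_{q}$, so every point of it lies in a neighborhood on which some member of $J^{q}$ is non-vanishing; picking finitely many such members $f_{1},\dots,f_{m}$ from a finite subcover, the function $h:=\sum_{j=1}^{m}\abs{f_{j}}^{2}=\sum_{j=1}^{m}\overline{f_{j}}f_{j}$ lies in $J^{q}$ (it is an ideal) and satisfies $h\ge c>0$ on $K_{\delta}$, where $c:=\min_{K_{\delta}}h$. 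For $\eta>0$ the function $h/(h+\eta)$ is $h$ times the bounded continuous function $(h+\eta)^{-1}$, hence lies in $J^{q}$, takes values in $[0,1]$, and is $\ge c/(c+\eta)$ on $K_{\delta}$. Therefore $g\cdot h/(h+\eta)\in J^{q}$, while, since $\abs{g}<\delta$ off $K_{\delta}$,
\[
\left\|\,g-g\,\frac{h}{h+\eta}\,\right\|_{\infty}=\left\|\,g\,\frac{\eta}{h+\eta}\,\right\|_{\infty}\le\max\!\left(\delta,\;\norm{g}_{\infty}\,\frac{\eta}{c}\right).
\]
Given a target $\rho>0$, take $\delta=\rho$, form the corresponding $c>0$, and then choose $\eta$ with $\norm{g}_{\infty}\eta/c\le\rho$; the right side is then $\le\rho$. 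Hence $g$ is a uniform limit of elements of $J^{q}$, and closedness gives $g\in J^{q}$. (Alternatively, one may simply cite the classification of closed ideals in $C(X)$, $X$ compact Hausdorff.)
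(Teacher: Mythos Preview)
Your proof is correct and follows essentially the same approach as the paper's. Both arguments handle the ideal properties by the obvious pointwise inequalities, and both establish closedness by absorbing the $\norm{u}^{2}$ term via the basic $L^{2}$ estimate \eqref{seven}; the paper packages this step as ``uniform convergence $f_{k}\to f$ implies operator-norm convergence $M_{f_{k}}\to M_{f}$, hence $M_{f}$ is compact,'' while you unpack it directly into the $\varepsilon$-estimate, but the content is identical. For the final identification the paper simply cites the classification of closed ideals in $C(X)$, whereas you additionally supply the standard hull--kernel argument; this is extra detail rather than a different idea.
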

\begin{proof}
It is easy to see that $hg$ is a compactness multiplier whenever $g$
is; $\norm{(hg)u}^{2}\leq \left (\sup_{z \in \overline{\Omega}} |h(z)| \right )\norm{gu}^{2}$. Thus,
$J^{q}$ is closed under multiplication by elements of
$C(\overline{\Omega})$. The sum of two compactness multipliers is a
compactness multiplier: $\norm{(g+f)u}^{2}\leq
2(\norm{gu}^{2}+\norm{fu}^{2})$. So $J^{q}$ is an ideal
of $C(\overline{\Omega})$.

To see that $J^{q}$ is closed under the \textit{sup}-norm, observe that the operator norm of $M_{f}$
(as an operator from
$\text{Dom}(\overline{\partial}) \cap \text{Dom}(\overline{\partial}^{*}) \rightarrow L^{2}_{(0,q)}(\Omega)$) is
dominated by $\sup_{z \in \overline{\Omega}}|f(z)|$. Indeed, we have
\begin{equation}\label{eq1b}
\|M_{f}u\|^{2} \leq \left (\sup_{z \in \overline{\Omega}}|f(z)| \right )^{2}\|u\|^{2} \leq \frac{D^{2}e}{q}\left (\sup_{z \in \overline{\Omega}}|f(z)| \right )^{2}\left ( \|\overline{\partial}u\|^{2} + \|\overline{\partial}^{*}u\|^{2} \right ) \; ,
\end{equation}
where $D$ is the diameter of $\Omega$. The second inequality is the fundamental $L^{2}$ estimate for the $\overline{\partial}$-complex dating back to H\"{o}rmander (\cite{MR0179443}, \cite{MR1800297}, \cite{Vienna}). Therefore, if $f \in C(\overline{\Omega})$ is a uniform limit of a sequence of compactness multipliers $\{f_{n}\}_{n=1}^{\infty}$, then the corresponding compact multiplication operators $M_{f_{n}}$ converge in operator norm to $M_{f}$. Consequently, $M_{f}$ is compact as well, and $f$ is a compactness multiplier.

Finally, any closed ideal in $C(\overline{\Omega})$ is the full ideal generated by the zero set. For this elementary fact, see for example \cite{MR1216137}, Theorem 2.1. In our situation, this fact is also easily established directly.
\end{proof}

\emph{Remark 2}: Any function in $C(\overline{\Omega})$ that vanishes on the boundary is thus a compactness multiplier.

\emph{Remark 3}: When $\Omega$ is a smooth domain, the set $A_{q}$ is a subset of the set of boundary points of infinite type. This is immediate because a subelliptic pseudolocal estimate holds near a point of finite type (\cite{Catlin87}).

\emph{Remark 4}: If the set $A_{q}$ is not empty, it cannot be `too small'. In particular,
it cannot satisfy property $(P_{q})$ (see \cite{MR740870} for $(P_{1})$, \cite{MR1912737}
and \cite{Vienna} for $(P_{q})$, $q \geq 1$). The proof of this fact is analogous to the
proof that compactness is a local property, see Lemma 1.2 in \cite{MR1912737}, Proposition 3.4
in \cite{Vienna}; essentially the same argument also occurs in the first part of the proof of
Theorem \ref{theorem_compactness} below. One shows indirectly that $A_{q}$ satisfying $(P_{q})$
implies a compactness estimate by writing a form $u$ as $u_{1} + u_{2}$, with $u_{1}$ supported
near $A_{q}$, and $u_{2}$ supported away from $A_{q}$. Then $u_{1}$ is estimated by using the estimate
$\sum^{\prime}_{K}\sum_{j,k}\int_{\Omega}(\partial^{2}\lambda/\partial z_{j}\partial\overline{z_{k}})u_{j,K}
\overline{u_{k,K}} \leq e(\|\overline{\partial}u\|^{2} + \|\overline{\partial}^{*}u\|^{2})$
(\cite{MR1748601}, p.83, \cite{Vienna}, Corollary 1.12) in the usual way. $u_{2}$ is estimated
via $u_{2} = \varphi u_{2}$, where $\varphi$ is supported away from $A_{q}$ and so is a compactness
multiplier. Details of this argument are in \cite{celik} and in the first part of the proof of Theorem \ref{theorem_compactness} below. In particular, $A_{1}$ cannot have two dimensional
Hausdorff measure zero, as such sets satisfy $(P_{1})$ (\cite{boas88, MR894582}), nor can it be contained
in a subvariety of the boundary of holomorphic dimension zero (\cite{MR894582}, Proposition 12).

\bigskip

We do not attempt to develop a serious theory of compactness multipliers here; in particular, we ignore questions relating to the algorithmic point of view in \cite{MR512213, MR1748604, nicoara}. Instead, we determine the sets $A_{q}$ for two classes of domains.

Denote by $\left\{f_{\alpha}(\mathbb{D}^{q})\right\}_{\alpha\in\Lambda}$ the family of $q$-dimensional
analytic polydiscs in the boundary of $\Omega$. That is, $f_{\alpha}$ is holomorphic on the $q$-dimensional
unit polydisc $\mathbb{D}^{q}$ and continuous on its closure, and it maps into $b\Omega$. It was shown by Fu and Straube (\cite{MR1659575})
that on a convex domain, the $\overline{\partial}$-Neumann operator is compact if and only if the boundary contains no $q$-dimensional analytic varieties. This motivates the following theorem.

\begin{theorem}\label{theorem_compactness}
Let $\Omega$ be a bounded convex domain in $\mathbb{C}^{n}$. Then
\begin{equation}\label{eq2a}
A_{q}= \overline{\bigcup_{\alpha\in\Lambda} f_{\alpha}(\mathbb{D}^{q})} \;.
\end{equation}
\end{theorem}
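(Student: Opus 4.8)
The plan is to prove \eqref{eq2a} as two inclusions; write $B_{q}$ for its right-hand side. Only the inclusion $A_{q}\subseteq B_{q}$ will use convexity, via the Fu--Straube theorem \cite{MR1659575}, while $B_{q}\subseteq A_{q}$ holds for every bounded pseudoconvex domain. I will use two reformulations. First, because $J^{q}$ is a closed ideal (Proposition \ref{lem2}) with zero set $A_{q}$, a point $z_{0}$ lies outside $A_{q}$ if and only if there is a $\chi\in C^{\infty}(\overline{\Omega})$ supported in a small ball about $z_{0}$, with $\chi(z_{0})=1$, that is a compactness multiplier: if $g$ is such a multiplier with $g(z_{0})\neq 0$, then $\chi=(\chi/g)\,g$ is one as well, since $\chi/g\in C(\overline{\Omega})$. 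Second, such a $\chi$ being a compactness multiplier is exactly a \emph{local compactness estimate} near $z_{0}$: for every $\varepsilon>0$ there is $C_{\varepsilon}$ with $\norm{\chi u}^{2}\leq\varepsilon(\norm{\overline{\partial}u}^{2}+\norm{\overline{\partial}^{*}u}^{2})+C_{\varepsilon}\norm{u}_{-1}^{2}$ for all $u\in\mbox{Dom}(\overline{\partial})\cap\mbox{Dom}(\overline{\partial}^{*})$.

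\emph{Inclusion $A_{q}\subseteq B_{q}$.} Let $z_{0}\notin B_{q}$ and choose $r>0$ with $K:=\overline{B(z_{0},r)}\cap b\Omega$ disjoint from $B_{q}$. Then $K$ contains no $q$-dimensional analytic variety, since near a regular point such a variety would be parametrized by one of the $f_{\alpha}(\mathbb{D}^{q})$. By (the localized form of) the Fu--Straube analysis of convex domains \cite{MR1659575}, a compact subset of $b\Omega$ containing no $q$-dimensional analytic variety satisfies property $(P_{q})$: for each $M$ there are a neighborhood $U_{M}\supseteq K$ and a plurisubharmonic $\lambda_{M}$ on $U_{M}$, $0\leq\lambda_{M}\leq 1$, all of whose sums of $q$ eigenvalues of the complex Hessian are $\geq M$ on $U_{M}$. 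I would then run the localization argument already indicated in Remark 4 (see \cite{MR1912737} Lemma 1.2, \cite{Vienna} Proposition 3.4, \cite{celik}): any $f\in C(\overline{\Omega})$ that vanishes off a neighborhood of $K$ is a compactness multiplier, because $fu$ is supported where $(P_{q})$ is available, and after commuting $f$ past $\overline{\partial},\overline{\partial}^{*}$ and absorbing the resulting zeroth-order terms one arrives at \eqref{eq:no1a} with error term $C_{\varepsilon}\norm{u}_{-1}^{2}$; equivalently, $(P_{q})$ near $K$ makes graph-bounded forms supported near $K$ precompact in $L^{2}$, so that $M_{f}$ is compact. Taking $f$ with $f(z_{0})=1$ shows $z_{0}\notin A_{q}$.

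\emph{Inclusion $B_{q}\subseteq A_{q}$.} As $A_{q}$ is closed it suffices to show $f_{\alpha}(\mathbb{D}^{q})\subseteq A_{q}$ for each $\alpha$. Fix $p_{0}=f_{\alpha}(\zeta_{0})$, $\zeta_{0}\in\mathbb{D}^{q}$, and suppose to the contrary that some compactness multiplier does not vanish at $p_{0}$; then by the first reformulation a local compactness estimate holds near $p_{0}$. Now $p_{0}$ lies in the interior of the $q$-dimensional analytic polydisc $f_{\alpha}(\mathbb{D}^{q})\subseteq b\Omega$. Convexity forces this polydisc, being a $q$-dimensional complex submanifold of $b\Omega$, to lie in a $q$-dimensional complex affine subspace: the restriction to the submanifold of a supporting complex-linear functional is constant by the maximum principle, and iterating this in complex-hyperplane slices cuts the ambient dimension down to $q$. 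So after a rigid motion of $\mathbb{C}^{n}$, which is an isometry and hence leaves the (Euclidean) $\overline{\partial}$-Neumann data and the multiplier condition unchanged, I may assume $p_{0}=0$ and that a neighborhood of $0$ in $\{z_{q+1}=\cdots=z_{n}=0\}$ is contained in $b\Omega$. The classical construction of forms localized near $p_{0}$ and concentrating towards this analytic polydisc in the boundary --- built from a holomorphic function on $\overline{\Omega}$ adapted to the polydisc (available by convexity), a cutoff in the flat ($z_{1},\dots,z_{q}$) directions, and a correction of lower $L^{2}$-order enforcing the $\overline{\partial}^{*}$-boundary condition (cf. \cite{MR1659575} and the treatments of obstructions to compactness referenced there) --- yields $\overline{\partial}$-closed $(0,q)$-forms $u_{j}\in\mbox{Dom}(\overline{\partial}^{*})$ with $\norm{u_{j}}\sim 1$, $\norm{\overline{\partial}^{*}u_{j}}$ bounded, $u_{j}\rightharpoonup 0$ in $L^{2}$ (hence $\norm{u_{j}}_{-1}\to 0$), and $L^{2}$-mass concentrating near $p_{0}$, so that $\liminf_{j}\norm{\chi u_{j}}>0$ for the cutoff $\chi$ at hand. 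Substituting $u_{j}$ into the local compactness estimate and letting $j\to\infty$ and then $\varepsilon\to 0$ produces a contradiction. Hence every compactness multiplier vanishes at $p_{0}$, i.e. $p_{0}\in A_{q}$; varying $\zeta_{0}$ and $\alpha$ and taking closure gives $B_{q}\subseteq A_{q}$.

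\emph{Main obstacle.} The genuinely substantive ingredient is, in the first inclusion, that the absence of $q$-dimensional analytic varieties in the compact set $K$ really does yield property $(P_{q})$ on a neighborhood of $K$: this is precisely the technical heart of Fu--Straube for convex domains --- construction of bounded plurisubharmonic functions with arbitrarily large complex Hessian, exploiting convexity and the lack of analytic structure --- which I would import from \cite{MR1659575} in its localized form, together with the careful but standard handling of error terms converting the $(P_{q})$-inequality into a bona fide compactness-multiplier estimate. In the second inclusion the fiddly point is the explicit construction of the concentrating forms $u_{j}$ with the correct membership in $\mbox{Dom}(\overline{\partial}^{*})$ and the correct bounds on $\overline{\partial}^{*}u_{j}$; this is classical but requires care. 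I expect the correct localized use of the Fu--Straube construction to be the main point.
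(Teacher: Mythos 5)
Your split into the two inclusions matches the paper's structure, and your treatment of $A_{q}\subseteq B_{q}$ (i.e.\ $P\notin B_{q}\Rightarrow P\notin A_{q}$) is essentially the paper's: you pass to the small convex domain $\Omega_{1}=B(P,r)\cap\Omega$ whose boundary carries no $q$-dimensional variety, invoke Fu--Straube, and localize. The paper applies Fu--Straube to conclude compactness of $N_{q}$ on $\Omega_{1}$ directly and then estimates $\varphi u$, whereas you phrase the same step as ``$K$ satisfies $(P_{q})$'' and then run the $(P_{q})$-to-compactness-multiplier argument; these are equivalent, and your version is fine.

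Where you genuinely diverge is the inclusion $B_{q}\subseteq A_{q}$. You argue this \emph{directly}: at a point $p_{0}\in f_{\alpha}(\mathbb{D}^{q})$ you would build a sequence of $(0,q)$-forms in $\mbox{Dom}(\overline{\partial})\cap\mbox{Dom}(\overline{\partial}^{*})$ concentrating along the (necessarily affine, by convexity) polydisc, with bounded graph norm, $\norm{u_{j}}_{-1}\to 0$, and $L^{2}$-mass pinned near $p_{0}$, contradicting any local compactness estimate there. The paper instead argues by contraposition and \emph{never constructs obstruction forms at all}: if $P\notin A_{q}$, one has compactness multipliers in a full neighborhood of $\overline{\Omega_{1}}\cap b\Omega$, and the spherical part $\overline{\Omega}\cap bB(P,r)$ of $b\Omega_{1}$ is handled via the H\"{o}rmander-type weighted inequality with weight $\lambda_{M}=M(|z-P|^{2}-r^{2})$; pasting these two estimates (the $\varphi_{M},\chi_{M}$ argument) yields a genuine compactness estimate on $\Omega_{1}$, and then Fu--Straube, used once more in its ``compactness $\Rightarrow$ no $q$-dimensional variety'' direction, gives $P\notin\overline{\bigcup_{\alpha}f_{\alpha}(\mathbb{D}^{q})}$. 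The payoff of the paper's route is that Fu--Straube is used as a black box in \emph{both} directions, and the delicate construction of concentrating forms lying in $\mbox{Dom}(\overline{\partial}^{*})$ with controlled $\overline{\partial}^{*}$-norm --- which is exactly the ``fiddly point'' you flag, and which amounts to redoing the obstruction half of Fu--Straube --- is entirely sidestepped. Your route should also work once those forms are actually produced, but it carries more technical burden and less economy.

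One small inconsistency to fix: you open by asserting that $B_{q}\subseteq A_{q}$ holds for every bounded pseudoconvex domain, but your own argument for it uses convexity in an essential way (to flatten $f_{\alpha}(\mathbb{D}^{q})$ into a complex affine piece before building the concentrating forms). Either drop that claim or reformulate the argument so it does not rely on the affine normalization; as written, the claim and the proof don't match.
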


\begin{proof}
We first show that if $P \in b\Omega$ is not in $A_{q}$, then it is not in the right hand side of \eqref{eq2a}.
Choose $r>0$ small enough so that $\Omega_{1} := \Omega \cap B(P,r)$ is a convex domain whose closure does not
intersect $A_{q}$. It suffices to establish a compactness estimate on $\Omega_{1}$: the result of Fu and Straube
mentioned above then implies that the boundary of $\Omega_{1}$ contains no $q$-dimensional analytic variety,
whence $P \notin \overline{\bigcup_{\alpha} f_{\alpha}(\mathbb{D}^{q})}$.

Let $M > 0$. Choose $\varphi_{M} \in
C^{\infty}_{0}(\mathbb{C}^{n})$, $0 \leq \varphi_{M} \leq 1$, and
supported on the set where $-1/M < |z-P|^{2} - r^{2} < 1/M$. Now let
$u \in \text{Dom}(\overline{\partial}) \cap
\text{Dom}(\overline{\partial}^{*})$ on $\Omega_{1}$. Denote by
$\lambda_{M}(z)$ a smooth function that on the support of
$\varphi_{M}$ agrees with $M(|z-P|^{2} - r^{2})$ and otherwise is
between $-1$ and $1$. Note that on the support of $\varphi_{M}$, the
complex Hessian of $\lambda_{M}$ is at least $M$. To estimate the
norm of $\varphi_{M}u$, we use inequality (2-10) from
\cite{MR1748601} which says that
\begin{equation}\label{eq2b}
\sum^{\prime}_{K}\sum_{j,k}\int_{\Omega_{1}}e^{\lambda_{M}}\frac{\partial^{2}\lambda_{M}}{\partial
z_{j}\partial\overline{z_{k}}}(\varphi_{M}u)_{jK}\overline{(\varphi_{M}u)_{kK}}\;
\leq \;\|\overline{\partial}(\varphi_{M}u)\|_{\Omega_{1}}^{2} +
\|\overline{\partial}^{*}(\varphi_{M}u)\|_{\Omega_{1}}^{2} \; .
\end{equation}
A comment is in order. (2-10) in \cite{MR1748601} is stated for sufficiently smooth domains. We make no smoothness
assumptions on $\Omega$ other than what is dictated by convexity ($\Omega$ is Lipschitz). In addition,
$\Omega_{1}$ has a nonsmooth part in the boundary coming from the intersection of $\Omega$ with a small ball.
However, the exhaustion procedure developed in \cite{Pluri} that uses the $\overline{\partial}$-Neumann
operators on a sequence of subdomains allows to forgo any boundary regularity assumptions: (2-10) in
\cite{MR1748601} holds on any bounded pseudoconvex domain. This is part (ii) of Corollary 1.12 in \cite{Vienna}.

For $M$ big enough, we can choose $\chi_{M} \in
C^{\infty}_{0}(B(P,r))$, identically equal to one on a
neighborhood of the part of the support of $(1-\varphi_{M})$ that
lies in $\overline{\Omega_{1}}$. Note that $\chi_{M}$ (continued by zero
outside $B(P,r)$) is a compactness multiplier on $\Omega$ (since
it vanishes on $A_{q}$). Also, the left hand side of \eqref{eq2b}
dominates $qM/e$ times $\|\varphi_{M}u\|_{\Omega_{1}}^{2}$; the
factor $q$ occurs because each term $|u_{J}|^{2}$ arises precisely
$q$ times as a term $|u_{j,K}|^{2}$. Therefore, we have for any
$\varepsilon^{\prime}$
\begin{multline}\label{eq2c}
\|u\|_{\Omega_{1}}^{2} \lesssim
\|\varphi_{M}u\|_{\Omega_{1}}^{2} +  \|\chi_{M}(1-\varphi_{M})u\|_{\Omega}^{2} \\
\lesssim \left (\frac{1}{M} + \varepsilon^{\prime} \right ) \left
(\|\overline{\partial}u\|_{\Omega_{1}}^{2} +
\|\overline{\partial}^{*}u\|_{\Omega_{1}}^{2} +
\|D\varphi_{M}u\|_{\Omega}^{2} \right ) + C_{\varepsilon^{\prime},
M}\|(1-\varphi_{M})u\|_{-1,\Omega}^{2} \; ,
\end{multline}
where $D\varphi_{M}$ denotes a derivative of $\varphi_{M}$. We have
used that $(1-\varphi_{M})u \in \text{Dom}(\overline{\partial}) \cap
\text{Dom}(\overline{\partial}^{*})$ on $\Omega$. To estimate
$\|D\varphi_{M}u\|_{\Omega}^{2}$, we use again that $\chi_{M}$ is a
compactness multiplier on $\Omega$:
\begin{multline}\label{eq2cc}
\|D\varphi_{M}u\|_{\Omega}^{2} =
\|\chi_{M}D\varphi_{M}u\|_{\Omega}^{2} \\
\leq \varepsilon^{\prime}C_{M} \left
(\|\overline{\partial}u\|_{\Omega_{1}}^{2} +
\|\overline{\partial}^{*}u\|_{\Omega_{1}}^{2} +
\|u\|_{\Omega_{1}}^{2} \right ) +
C_{\varepsilon^{\prime}}\|D\varphi_{M}u\|_{-1,\Omega}^{2} \\
\lesssim \varepsilon^{\prime}C_{M} \left
(\|\overline{\partial}u\|_{\Omega_{1}}^{2} +
\|\overline{\partial}^{*}u\|_{\Omega_{1}}^{2} \right ) +
C_{\varepsilon^{\prime}}\|D\varphi_{M}u\|_{-1,\Omega}^{2} \; .
\end{multline}
Because $(1-\varphi_{M})$ and $D\varphi_{M}$ are compactly supported
in $B(P,r) \cap \overline{\Omega}$, the $(-1)$-norms on $\Omega$ on
the right hand side of \eqref{eq2c} and \eqref{eq2cc} are dominated
by the corresponding $(-1)$-norms on $\Omega_{1}$ and hence by
$\|u\|_{-1, \Omega_{1}}$. Therefore, the desired compactness
estimate on $\Omega_{1}$ results from \eqref{eq2c} and \eqref{eq2cc}
upon taking $M$ big enough and then $\varepsilon^{\prime}$ small
enough.

For the other direction, assume that $P \notin \overline{\bigcup_{\alpha} f_{\alpha}(\mathbb{D}^{q})}$. Choose $r>0$ small enough so that the closure of $\Omega_{1} := B(P,r) \cap \Omega$ is disjoint from $\overline{\bigcup_{\alpha} f_{\alpha}(\mathbb{D}^{q})}$. $\Omega_{1}$ is a convex domain without $q$-dimensional varieties in the boundary. Again by the Fu-Straube result mentioned above, the $\overline{\partial}$-Neumann operator on $(0,q)$-forms on $\Omega_{1}$ is compact. Therefore, for a smooth function $\varphi$ supported near $P$, we have for any $\varepsilon > 0$
\begin{multline}\label{eq2d}
\|\varphi u\|_{\Omega}^{2} = \|\varphi u\|_{\Omega_{1}}^{2} \\
\leq \varepsilon \left (\|\overline{\partial}(\varphi u)\|_{\Omega_{1}}^{2} + \|\overline{\partial}^{*}(\varphi u)\|_{\Omega_{1}}^{2} \right ) + C_{\varepsilon}\|\varphi u\|_{-1, \Omega_{1}}^{2} \\
\lesssim \varepsilon C_{\varphi}\left (\|\overline{\partial}
u\|_{\Omega}^{2} + \|\overline{\partial}^{*} u\|_{\Omega}^{2} +
\|u\|_{\Omega}^{2} \right ) +
C_{\varepsilon, \varphi}\|u\|_{-1,\Omega_{1}}^{2} \\
\lesssim \varepsilon C_{\varphi} \left (\|\overline{\partial}
u\|_{\Omega}^{2} + \|\overline{\partial}^{*} u\|_{\Omega}^{2} \right
) + C_{\varepsilon, \varphi}\|u\|_{-1,\Omega}^{2} \; .
\end{multline}
In the last inequality, we have used the easily verified inequality
$\|u\|_{-1,\Omega_{1}} \leq \|u\|_{-1,\Omega}$. \eqref{eq2d} shows
that any such $\varphi$ is a compactness multiplier on $\Omega$.
Choosing a $\varphi$ that does not vanish at $P$ shows that $P
\notin A_{q}$. This completes the proof of Theorem
\ref{theorem_compactness}.
\end{proof}

We now turn to complete Hartogs domains in $\mathbb{C}^{2}$. A complete
Hartogs domain $\Omega$ in $\C^{2}$ is defined by $\abs{w}<
e^{-\phi(z)}$ for $z\in U$, where $U$ is a domain in $\mathbb{C}$
and $\phi(z)$ is an upper semi-continuous function on $U$. $\Omega$ is
pseudoconvex if and only if $\phi(z)$ is subharmonic. If $\phi$ is
at least $C^{2}$, then a computation shows that the weakly pseudoconvex boundary points
$(z,w)$ with $w\neq 0$ are those where $\Delta\phi(z)=0$ (see also \cite{MR847923}, p. 100).

On a smooth bounded pseudoconvex (not necessarily complete) Hartogs
domain compactness of the $\overline{\partial}$-Neumann operator is
equivalent to Catlin's property ($P$) (\cite{MR2166176}).
Additionally, if the domain is also complete and the boundary points
with $w=0$ are strictly pseudoconvex, then both of the above
conditions (compactness and property ($P$)) are equivalent to the
following: the projection of the weakly pseudoconvex boundary points
into the $z$-plane has empty fine interior (\cite{MR894582}, p.310,
\cite{Vienna}, Lemma 3.19). Recall that the fine topology is the
smallest topology that makes all subharmonic functions continuous;
see, e.g. \cite{MR0261018, MR1801253} for properties of this
topology. It is strictly larger than the Euclidean topology, and
there exist compact sets with empty Euclidean interior, but nonempty
fine interior, see \cite{MR1801253} example $7.9.3$. Of importance
here will be the following fact, which explains why the fine
topology plays a role in our context: a compact subset of the plane
satisfies property $(P)$ if and only if it has empty fine interior
(\cite{MR894582}, Proposition 1.11, \cite{Vienna}, Proposition
3.17). We will use the notation $Int_{f}(K)$ for the fine interior
points of the set $K$, and $\overline{B}^{E}$ for the
Euclidean closure of a set $B$.

Denote by $\pi$ the projection $\pi: \mathbb{C}^{2} \rightarrow
\mathbb{C}$. $(z,w) \rightarrow z$. Let $K$ be the projection of the
weakly pseudoconvex boundary points. When the boundary points
with $w=0$ are strictly pseudoconvex, $K =
\{z \in U\;|\;\Delta\phi(z)=0\}$, and $K$ is a compact subset of the
base domain $U$. On such a domain we have the following
characterization of $A:=A_{1}$.

\begin{theorem}\label{theorem_compactnessA}
Let $\Omega$ be a smooth bounded complete pseudoconvex Hartogs
domain in $\C^{2}$. Assume that the boundary points of the form
$(z,0)$ are strictly pseudoconvex. Then
$$A=\overline{\pi^{-1}\left(Int_{f}(K)\right)}^{E}.$$
\end{theorem}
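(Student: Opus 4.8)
My plan is to reduce Theorem~\ref{theorem_compactnessA} to the pointwise statement that, for $P=(z_{0},w_{0})\in b\Omega$, one has $P\notin A$ if and only if some Euclidean disc $D(z_{0},\delta)$ in the $z$-plane satisfies $D(z_{0},\delta)\cap Int_{f}(K)=\emptyset$. Granting this, the theorem follows from two remarks. First, $K$ is a compact subset of $U$ (the weakly pseudoconvex points, whose projection is $K$, stay away from $bU$ because the boundary points with $w=0$ are strictly pseudoconvex), so $\overline{Int_{f}(K)}^{E}\subseteq K\Subset U$; in particular, if $z_{0}\in\overline{Int_{f}(K)}^{E}$ then $w_{0}\neq0$ and $\phi$ is continuous near $z_{0}$, and this gives $\pi^{-1}\bigl(\overline{Int_{f}(K)}^{E}\bigr)\cap b\Omega=\overline{\pi^{-1}(Int_{f}(K))}^{E}$ --- the nontrivial inclusion being: if $z_{j}\to z_{0}$ with $z_{j}\in Int_{f}(K)$, set $w_{j}=w_{0}e^{\phi(z_{0})-\phi(z_{j})}$, so $(z_{j},w_{j})\in b\Omega$ and $w_{j}\to w_{0}$. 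Second, for $\delta'<\delta$ the set $K\cap\overline{D(z_{0},\delta')}$ is compact and $Int_{f}\bigl(K\cap\overline{D(z_{0},\delta')}\bigr)=Int_{f}(K)\cap D(z_{0},\delta')$ (a point of the bounding circle is never finely interior to the closed disc), so the condition on the right is equivalent to $K\cap\overline{D(z_{0},\delta')}$ having empty fine interior, i.e.\ to its satisfying property $(P)$ (\cite{MR894582}, Proposition~1.11; \cite{Vienna}, Proposition~3.17). Throughout I use that $A$ is closed and invariant under the rotations $R_{\theta}:(z,w)\mapsto(z,e^{i\theta}w)$, that (as in the discussion following Proposition~\ref{lem2}) $P\notin A$ is equivalent to a \emph{local} compactness estimate near $P$, and hence --- compactness being a local property --- to $P\notin A_{\Omega'}$ for any bounded pseudoconvex $\Omega'$ agreeing with $\Omega$ near $P$.

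For ``$\Leftarrow$'', assume $D(z_{0},\delta)\cap Int_{f}(K)=\emptyset$ (so $w_{0}\neq0$; the case $w_{0}=0$ is covered by strict pseudoconvexity). The idea is to replace $\Omega$, away from a neighborhood of $P$, by a smooth bounded complete pseudoconvex Hartogs domain whose obstruction set is as small as $K\cap\overline{D(z_{0},\delta/2)}$. Let $\psi$ be a smooth subharmonic function on $\mathbb{C}$ with $\psi\equiv0$ on $D(z_{0},\delta/2)$ and $\Delta\psi>0$ on $\mathbb{C}\setminus\overline{D(z_{0},\delta/2)}$ --- e.g.\ $\psi(z)=f(|z-z_{0}|^{2})$ with $f$ smooth, identically $0$ on $[0,(\delta/2)^{2}]$ and convex increasing afterward --- and put $\widetilde{\phi}=\phi+\psi$, $\Omega'=\{(z,w):z\in U,\ |w|<e^{-\widetilde{\phi}(z)}\}$. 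Then $\widetilde{\phi}$ is subharmonic, $\Omega'\subseteq\Omega$ is again a smooth bounded complete pseudoconvex Hartogs domain (near $bU$ the weight $e^{-\widetilde{\phi}}$ differs from $e^{-\phi}$ only by the smooth positive factor $e^{-\psi}$, so $\Omega'$ is smooth there and strictly pseudoconvex at $w=0$ just as $\Omega$ is), it coincides with $\Omega$ over $D(z_{0},\delta/2)$ --- hence near $P$ --- and its obstruction set $\{\Delta\widetilde{\phi}=0\}$ equals $K\cap\overline{D(z_{0},\delta/2)}$, which has empty fine interior. By the characterization recalled before the theorem, $N_{1}$ is compact on $\Omega'$, so $A_{\Omega'}=\emptyset$, and therefore $P\notin A$ by locality. (One could instead work directly on $\Omega$, pulling back a property-$(P)$ family $\{\lambda_{M}\}$ for $K\cap\overline{D(z_{0},\delta/2)}$ by $\Lambda_{M}(z,w)=\lambda_{M}(z)$ and running the weighted estimate (2-10) of \cite{MR1748601} / Corollary~1.12 of \cite{Vienna} as in the proof of Theorem~\ref{theorem_compactness}; but then one must in addition bound the $d\overline{w}$-component of $u$ by its $d\overline{z}$-component using the boundary relation $u_{2}=-2w\,(\partial\phi/\partial z)\,u_{1}$ on $b\Omega$, since $\Lambda_{M}$ carries no positivity in the $w$-direction.)

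For ``$\Rightarrow$'' I argue contrapositively. Suppose $z_{0}\in\overline{Int_{f}(K)}^{E}$, so there are $z_{j}\to z_{0}$ with $z_{j}\in Int_{f}(K)$; I claim $P\in A$. If not, the local compactness estimate holds near $P$, and hence --- by rotation invariance and a partition of unity over the compact circle $\Gamma_{z_{0}}:=\pi^{-1}(z_{0})\cap b\Omega$ --- near all of $\Gamma_{z_{0}}$. On the other hand, for $j$ large the construction underlying the failure of compactness on complete Hartogs domains with $Int_{f}(K)\neq\emptyset$ (\cite{Matheos}, \cite{MR894582}; cf.\ \cite{MR1912737}, Theorem~4.2) produces, from the finely interior point $z_{j}$, a sequence $u^{(k)}\in\mathrm{Dom}(\overline{\partial})\cap\mathrm{Dom}(\overline{\partial}^{*})$ supported in $\pi^{-1}(\text{a small neighborhood of }z_{j})$ --- hence supported near $\Gamma_{z_{0}}$ --- which is bounded in the graph norm but has no $L^{2}$-convergent subsequence. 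Since $L^{2}(\Omega)$ embeds compactly into $W^{-1}(\Omega)$, this contradicts the local compactness estimate near $\Gamma_{z_{0}}$. Hence $P\in A$, and, $A$ being closed, $\overline{\pi^{-1}(Int_{f}(K))}^{E}\subseteq A$, completing the proof.

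The step I expect to be the main obstacle is the direct form of ``$\Leftarrow$'' alluded to parenthetically above: on a Hartogs domain the plurisubharmonic functions $\Lambda_{M}(z,w)=\lambda_{M}(z)$ furnished by property $(P)$ of $K$ have complex Hessian degenerate in the $w$-direction, so the weighted basic estimate controls only the $d\overline{z}$-component of $u$, and closing the estimate requires feeding in the linear relation between the two components of a form in $\mathrm{Dom}(\overline{\partial}^{*})$ (equivalently, the Kohn--Morrey identity adapted to the Hartogs geometry) --- exactly the subtlety flagged in the introduction. The reduction via $\widetilde{\phi}$ sidesteps this at the modest price of the construction of $\psi$ and of checking that $\Omega'$ remains smooth with strictly pseudoconvex boundary at $w=0$.
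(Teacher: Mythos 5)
Your ``$\Leftarrow$'' direction is correct, and in fact slightly cleaner than the paper's handling of the analogous inclusion: the paper cuts $\Omega$ with a ball $B(P,r)$ and verifies property $(P_1)$ for the resulting domain's boundary by a countable-union argument, whereas you modify the weight $\phi$ to $\widetilde\phi=\phi+\psi$, obtaining a complete Hartogs domain $\Omega'$ that agrees with $\Omega$ over $D(z_0,\delta/2)$, has weak boundary points projecting exactly onto $K\cap\overline{D(z_0,\delta/2)}$, and is smooth with strictly pseudoconvex boundary at $w=0$ (the latter because $e^{-2\widetilde\phi}=e^{-2\phi}e^{-2\psi}$ extends smoothly past $bU$ and the $z$-gradient there is a nonzero multiple of the one for $\Omega$). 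Interestingly, this weight-modification construction is essentially the one the paper deploys in the \emph{opposite} direction of its proof. You then only need the ``easy'' implications (Sibony: empty fine interior $\Rightarrow$ property $(P)$ of $K\cap\overline{D}$; Catlin/Sibony: $(P)$ $\Rightarrow$ compactness) plus the locality of compactness multipliers, all of which are sound.

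The gap is in ``$\Rightarrow$''. You assert that ``the construction underlying the failure of compactness on complete Hartogs domains with $Int_{f}(K)\neq\emptyset$ produces, from the finely interior point $z_{j}$, a sequence $u^{(k)}$ supported in $\pi^{-1}(\text{a small neighborhood of }z_{j})$'' which is bounded in graph norm but has no $L^{2}$-convergent subsequence. This is a substantive, unproved claim. The implication ``$Int_{f}(K)\neq\emptyset$ implies $N_{1}$ not compact'' comes via the hard direction of the Christ--Fu theorem (compactness $\Rightarrow$ property $(P)$) combined with Sibony's potential-theoretic characterization; the Christ--Fu argument runs through magnetic Schr\"odinger operators obtained by Fourier decomposition in $w$, and it is by no means apparent from \cite{MR2166176}, \cite{Matheos}, or \cite{MR1912737} (Theorem~4.2) that the resulting failure of compactness can be exhibited by a bounded graph-norm sequence with support confined to a prescribed fiber neighborhood $\pi^{-1}(D(z_{j},\epsilon))$; indeed such a statement would amount to a local version of Christ--Fu that these references do not provide. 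The paper circumvents exactly this issue: given $P\notin A$, it builds a complete Hartogs domain $\Omega_{1}$ (your $\widetilde\phi$-type modification) sharing a rotationally invariant boundary piece with $\Omega$ near $P$, uses compactness multipliers of $\Omega$ to establish a compactness estimate on all of $\Omega_{1}$ (exploiting that $\Omega_{1}$ is strictly pseudoconvex, hence under control, off the shared piece), and only then applies the \emph{global} Christ--Fu $+$ Sibony characterization to $\Omega_{1}$ to conclude empty fine interior, which transfers back to $K$ near $\pi(P)$. To repair your argument you would either need to substantiate the localized-sequence claim, or replace the contradiction argument by the paper's auxiliary-domain route.
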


\begin{proof}
The structure of the proof is the same as in the proof of Theorem
\ref{theorem_compactness}, but the details change. We first show
that if $P\in b\Omega \setminus
\overline{\pi^{-1}\left(Int_{f}(K) \right)}^{E}$, then $P \in
b\Omega \setminus A$. Choose $r>0$ small enough so that
$\pi(B(P,r))$ is disjoint from $\overline{Int_{f}(K)}^{E}$ and set
$\Omega_{1}:=B(P,r) \cap \Omega$. We will show that $b\Omega_{1}$
satisfies property $(P_{1})$, so that the $\overline{\partial}$-Neumann 
operator on $\Omega_{1}$ is compact. The rest of the argument then follows
that in the second part of the proof of Theorem
\ref{theorem_compactness}.

We write $b\Omega_{1}$ as a countable union of compact sets, all of
which satisfy property $(P_{1})$; then so does $b\Omega_{1}$
(\cite{MR894582}, Proposition 1.9, \cite{Vienna}, Corollary 3.14). The first set
is $bB(P,r) \cap \overline{\Omega}$. The second set consists of the
set $W$ of weakly pseudoconvex boundary points of $\Omega$ that are
contained in $\overline{B(P,r)}$. Note that $\pi(W)$ has empty fine
interior, hence satisfies property $(P_{1})$ in the plane.
Therefore, $W$ satisfies property $(P_{1})$, by \cite{MR894582},
Proposition 1.10 (the image $\pi(W)$ does, and the fiber over each point is
a circle and so also satisfies $(P_{1})$). Finally, write the set of
strictly pseudoconvex boundary points of $\Omega$ as a countable
union of compact sets $K_{n}$; they then satisfy property $(P_{1})$.
Thus so do the compacts $K_{n} \cap \overline{B(P,r)}$. The union of
these sets together with $W$ and the first set above equals
$b\Omega_{1}$, and we are done.

Now let $P \in b\Omega \setminus A$. We will construct a Hartogs domain $\Omega_{1}$ that shares a (rotationally invariant) piece of boundary with $\Omega$ that contains $P$ and does not intersect $A$. In addition, $\Omega_{1}$ will be strictly pseudoconvex off of that shared piece. Observe that $A$ is invariant under rotations in $w$: pullbacks commute with $\overline{\partial}$ and, because the rotations induce isometries on $L^{2}_{(0,1)}(\Omega)$, also with $\overline{\partial}^{*}$ (this observation was exploited in \cite{BCS}). So $f$ is a compactness multiplier if and only if $f_{\theta}(z,w):=f(z,e^{i\theta}w)$ is. Therefore, $P \in b\Omega \setminus A$ implies $P_{1}:=\pi(P) \notin \pi(A)$. Choose radii $0<r_{1}<r_{2}<r_{3}$ such that $\overline{D(P_{1},r_{3})} \cap \pi(A)=\emptyset$. $\Omega_{1}$ is going to be over the base $D(P_{1},r_{3})$. Then choose $\varphi(z)\in C_{0}^{\infty}(D(P_{1},r_{3}))$ such that $\varphi(z)\equiv 1$ on $\overline{D(P_{1},r_{2})}$. Now set $\psi(z):=\phi(z)\varphi(z)+h(z)$ on $D(P_{1},r_{3})$, where $h(z)$ is a smooth radially symmetric subharmonic function on $D(P_{1},r_{3})$, $h(z)=0$ on $\overline{D(P_{1},r_{1})}$, and equal to $-\frac{1}{2}\log(r_{3}^{2}-|z-P_{1}|^{2})$ when $|z-P_{1}|$ is close to $r_{3}$. Such a function can be chosen to be increasing and concave up, and to have its second (radial) derivative as big as we wish on a given compact subset of $D(P_{1},r_{3})\setminus \overline{D(P_{1},r_{1})}$, in particular on $\left\{\Delta(\phi\varphi) \leq 0\right\} \cap supp (\varphi) \cap \{|z| \geq r_{2}\}$. That means that he Laplacian of $h$ on this set can be made as big as we wish. Making this Laplacian big enough ensures that $\psi(z)$ is subharmonic, and
$$\Omega_{1}:=\lbrace (z,w)\in\C^{2}\ |\ z\in D(P_{1},r_{3}), \ \abs{w}<e^{-\psi(z)}\rbrace$$
is a smooth bounded pseudoconvex complete Hartogs domain which near the boundary of its base looks like a ball. This construction comes from \cite{MR1912737}, proof of Theorem 4.2; see also \cite{Vienna}, proof of Theorem 3.25.

We claim that the portion of the boundary of $\Omega_{1}$ that sits over $\{r_{1} \leq |z-P_{1}| \leq r_{3}\}$ satisfies property $(P_{1})$. Parts over compact subsets of $\{r_{1} < |z-P_{1}| < r_{3}\}$ are compact subsets of the strictly pseudoconvex part of the boundary and so satisfy property $(P_{1})$. The part corresponding to $\pi^{-1}(\{|z|=r_{1}\})$ satisfies $(P_{1})$ for the same reason that the set $W$ in the first part of the proof did (i.e. by \cite{MR894582}, Proposition 1.10). The circle $\{|z|=r_{3}\}$ also satisfies $(P_{1})$. Thus the portion of the boundary we are interested in is the countable union of compact sets that satisfy property $(P_{1})$, and the claim is established (again as in the first part of the proof).

Using the claim from the previous paragraph together with the fact that for each boundary point that is common to $\Omega_{1}$ and $\Omega$ (these are the boundary points of $\Omega_{1}$ over the set $\{|z| \leq r_{1}\}$) there exists a compactness multiplier on $\Omega$ that does not vanish at the point, one can follow the first part of the proof of Theorem \ref{theorem_compactness} to show that the $\overline{\partial}$-Neumann problem on $\Omega_{1}$ satisfies a compactness estimate. By the result of Christ and Fu \cite{MR2166176}, the boundary of $\Omega_{1}$ satisfies property $(P_{1})$, and by our discussion above, the projection of its weakly pseudoconvex boundary points onto the $z$-plane has empty fine interior. Therefore (because $\Omega$ and $\Omega_{1}$ share an open piece of boundary near $P$), $P_{1} \notin \overline{int_{f}(K)}^{E}$, and consequently $P \in b\Omega \setminus \overline{\pi^{-1}\left(Int_{f}(K)\right)}^{E}$. This completes the proof of Theorem \ref{theorem_compactnessA}.
\end{proof}

\end{document}